\documentclass{amsart}
\usepackage[margin=1.5in]{geometry}

\usepackage[all]{xy}
\usepackage{tikz-cd}
\usepackage{enumerate}
\usepackage{amsfonts}
\usepackage{amssymb, amscd, amsmath}
\usepackage{graphicx}
\usepackage{mathtools}
\usepackage{xcolor}
\usepackage{float}
\usepackage[neverdecrease]{paralist}
\usepackage{url}

\DeclareMathOperator{\Sup}{Sup}
\DeclareMathOperator{\conv}{conv}


\newtheorem{teo}{Theorem}[section]
\newtheorem{cor}[teo]{Corollary}

\newtheorem{lema}[teo]{Lemma}

\theoremstyle{definition}
\newtheorem{defn}[teo]{Definition}


\title{Peabodies of Constant Width}

\author{Isaac Arelio}
\author[L. Montejano]{Luis Montejano   }
\author{Deborah Oliveros}




\begin{document}
\maketitle

\section{Introduction}

Bodies of constant width and their properties have been known for centuries. Leonard Euler, for example, studied them in the eighteenth century under the name of orbiforms.  
They have received considerable attention in popular mathematics, in such contexts as videos, surveys, devices, and art, among others.
There is a broad, diverse body of knowledge on bodies of constant width supported by an extensive and sophisticated theoretical framework. See, for instance, the book \emph{Bodies of Constant Width: An Introduction to Convex Geometry With Applications} by Birkh\"auser, 2019 \cite{MMO}. 

It is well known that there is a non-constructive procedure to complete a set 
to a body of constant width of the same diameter, but besides the two Meissner solids, the obvious constant width bodies
of revolution, and the Meissner polyhedra, there are only a few tangible examples in the literature of constant width bodies that have a concrete finite procedure of construction. 
The purpose of this paper is to describe  a new $3$-dimensional family of bodies of constant width that we have called peabodies, obtained from the Reuleaux tetrahedron by replacing a small neighborhood of all six edges with sections of an envelope of spheres. This family contains, in particular, the two Meissner solids and a body with tetrahedral symmetry that we have called Robert's body, described by Patrick Roberts (see Figure \ref{patric})  (we encourage the reader to watch the animation presented at the beginning of \cite{RP} as well). 

\begin{figure}[h]
\begin{center}
\includegraphics[scale=.3]{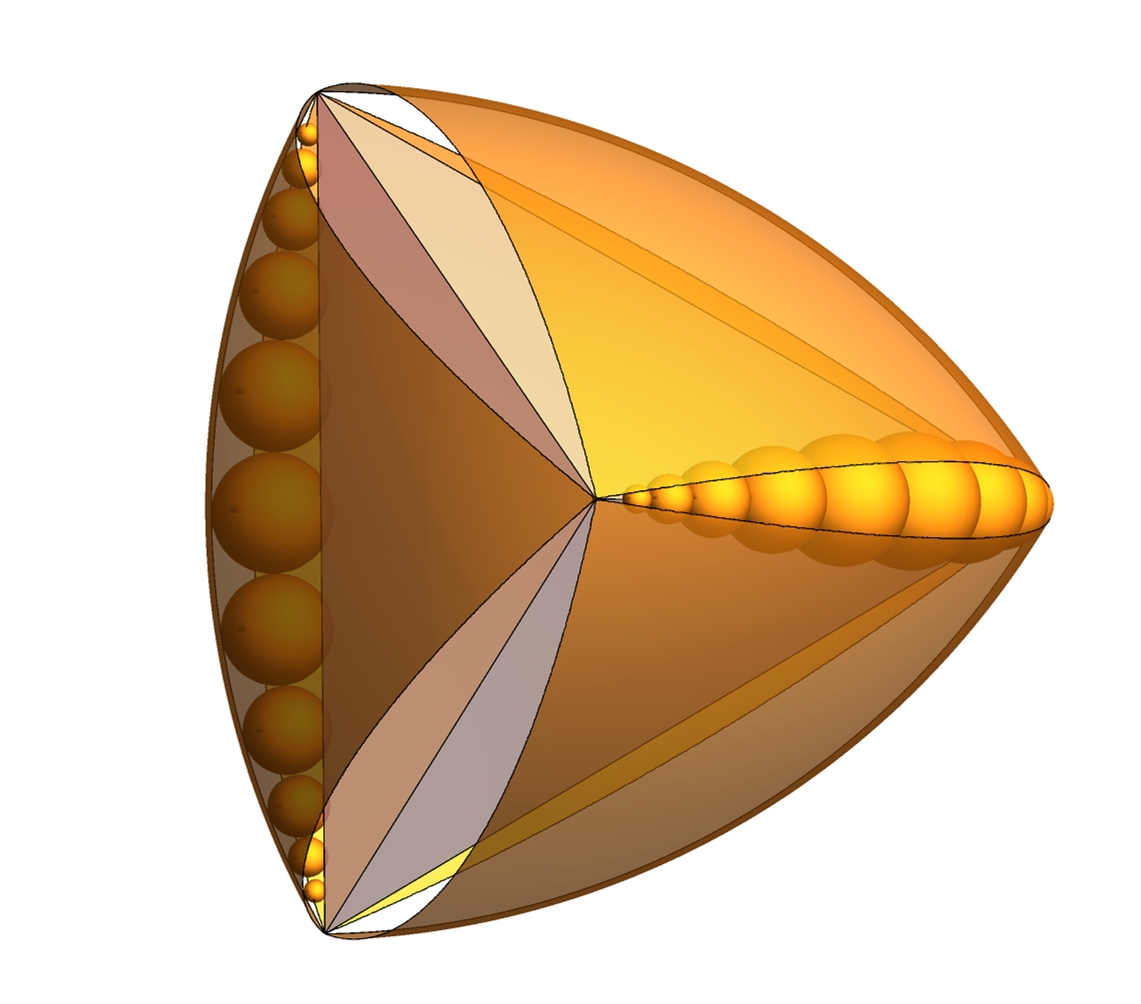}
\caption{Robert's body of constant width}\label{patric}
\end{center}
\end{figure}

It was a very pleasant surprise to discover that behind the construction of this family lies the classical notion of confocal quadrics discussed, for example, by Hilbert in his famous book \emph{Geometry and Imagination} \cite{H}.  This notion has been used as early as Dupin in the nineteenth century to build surfaces that are envelopes of spheres with surprisingly interesting properties; see \cite{D}. 

\begin{figure}[h]
\begin{center}
\includegraphics[scale=.42]{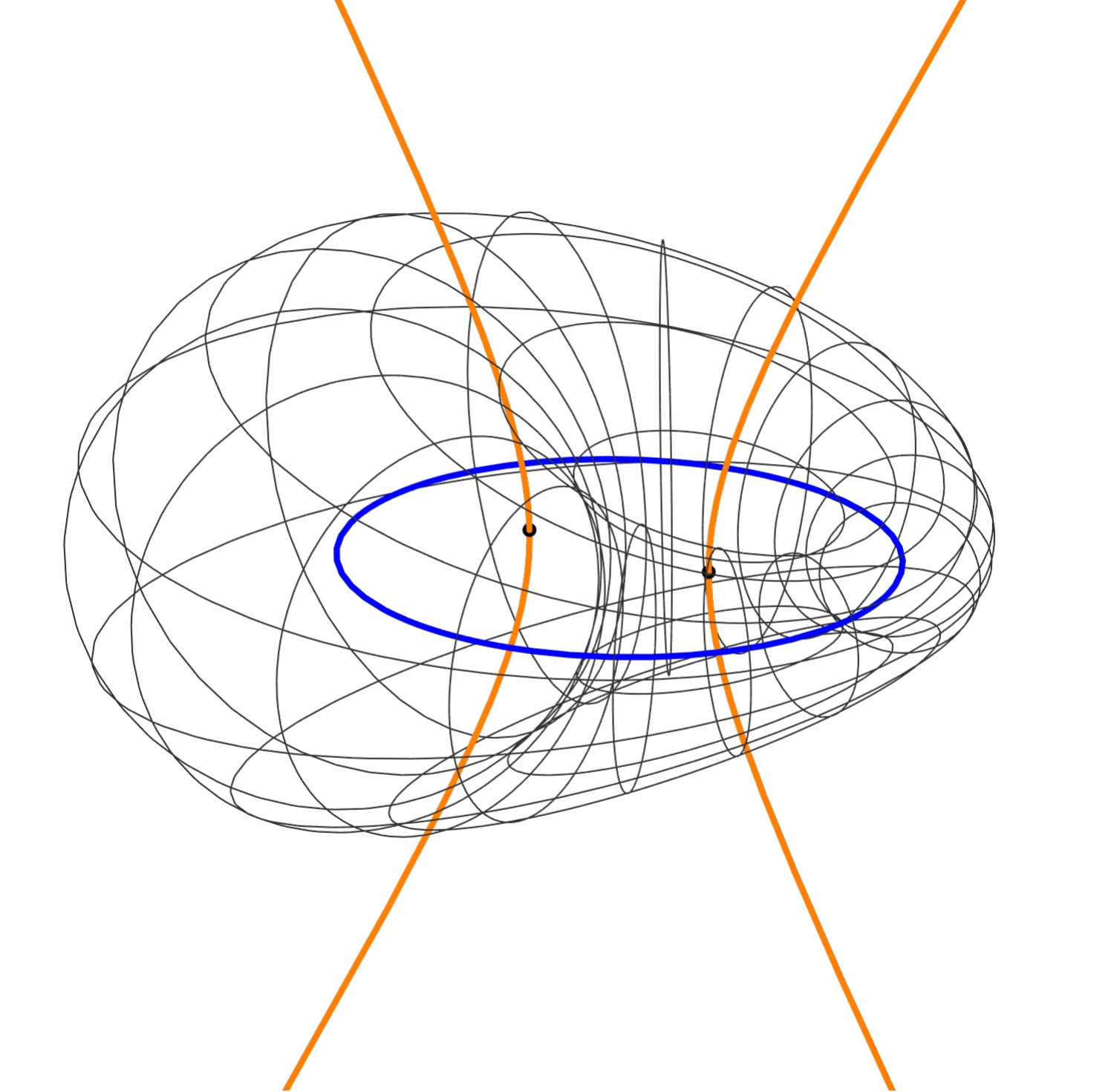}
\caption{Dupin cyclide}\label{}
\end{center}
\end{figure}

In Section 2, we study confocal quadrics and prove that the distances of an alternating sequence of four points in two confocal quadrics always satisfies a simple equation; a result that is interesting in its own right. In Sections 3 and 4 we construct this new family of $3$-dimensional bodies and show that they are bodies of constant width. Next, in Section 5 we analyze the particular case of Robert's body, showing that it has tetrahedral symmetry and its boundary is smooth except for the $4$ vertices of the tetrahedron, in which it has vertex singularities. Moreover, we observe that this body is not the Minkowski sum of the two Meissner bodies of constant width by showing that they differ in one of their sections.
We also note that Robert's body can not be the extremum body for the Blaschke-Lebesgue conjecture about the minimum volume among all 3-dimensional bodies of constant width. Furthermore, we show that there is a continuous deformation along the collection of peabodies of constant width from the most symmetric Robert's body to the classic Meissner bodies.


Finally, in Section 6 we point out that the construction of bodies of constant width obtained from ball polyhedra whose singularities are self dual-graphs,
defined in \cite{MR} by Montejano and Roldan, can be adapted by replacing the singularities of these ball polyhedra with sections of an envelope of spheres to achieve constant width without changing the group of symmetries of the ball polyhedra.

\section{Confocal quadrics}

Let us consider two quadrics lying in orthogonal planes whose axes are the same.  See Figure \ref{cocon}. We say that these two conics are confocal if the focus of each of them lies on the other. See Section 2.4 of the  book \emph{Geometry and the Imagination} by Hilbert and Cohen-Vossen \cite{H}.

\begin{figure}[h]
\begin{center}
\includegraphics[scale=.5]{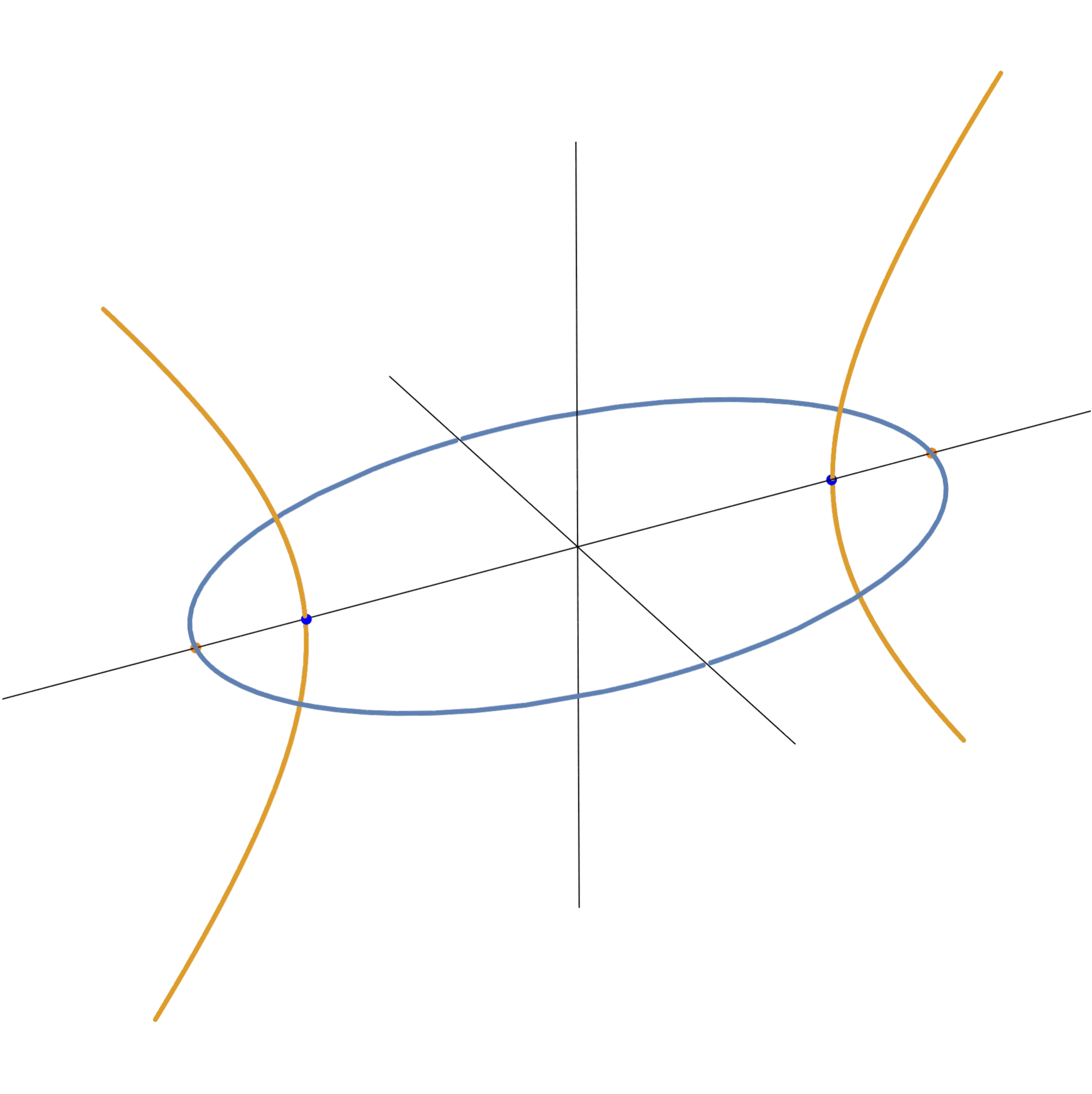}
\caption{Confocal quadrics }\label{cocon}
\end{center}
\end{figure}


In the standard case, when one of the two confocal quadrics is the ellipse and the other  a hyperbola, Equation \eqref{elipse-hyperbola} gives us its planar equations. 

\begin{equation}\label{elipse-hyperbola}
\frac{z^2}{a^2}+\frac{x^2}{b^2}=1 \quad\quad \text{ and } \quad\quad \frac{z^2}{a^2-b^2}-\frac{y^2}{b^2}=1.	
\end{equation}

Similarly, in the case when the two confocal quadrics are parabolas and the origen is the midpoint of their focus, 
the planar equations are given in \eqref{parabolapor2}.  

\begin{equation}\label{parabolapor2}
z=\frac{x^2}{4a}-\frac{a}{2} \quad\quad \text{ and }\quad\quad z=-\frac{y^2}{4a}+\frac{a}{2}.
\end{equation}

The following theorem is interesting in its own right and, as we will see in Section \ref{peapods}, it is relevant to our construction of peabodies of constant width. Throughout the paper we will abuse the notation by denoting by $ab$ both the interval with extremes at the points $a$ and $b$, as well as its length.

\begin{teo}\label{unoyuno} Let $C_1$ and $C_2$ be two confocal quadrics. Suppose $a_1\in C_1$,  $a_2\in C_2$, $a_3\in C_1$ and $a_4\in C_2$. 
\begin{enumerate}[a)]
\item If $C_1$ is connected and $a_2, a_4$ lie in the same component of $C_2$, then
\[a_1a_2+a_3a_4=a_2a_3+a_1a_4.\]
\item If $C_1$ is connected and $a_2$ and $a_4$ lie in different components of $C_2$, then
\[a_1a_2+a_1a_4=a_2a_3+a_3a_4.\]
\end{enumerate}
\end{teo}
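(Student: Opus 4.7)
The plan is to establish the identities by first proving a stronger \emph{separable distance formula}: for every $P \in C_1$ and $Q \in C_2$,
\[
  PQ \;=\; g(Q) \,-\, \sigma(Q)\, f(P),
\]
where $f$ depends only on $P$, $g$ depends only on $Q$, and the sign $\sigma(Q)\in\{+1,-1\}$ is constant on each connected component of $C_2$. From this, both identities in the theorem are immediate algebra. In case (a) the signs at $a_2$ and $a_4$ agree, so
\[
  a_1a_2+a_3a_4 \,=\, g(a_2)+g(a_4)-\sigma\bigl(f(a_1)+f(a_3)\bigr) \,=\, a_2a_3+a_1a_4;
\]
in case (b) the signs are opposite, and the $f$-terms cancel pairwise when the sums are grouped as in the statement, making both sides equal to $g(a_2)+g(a_4)$.

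To derive the separable formula in the ellipse/hyperbola case of \eqref{elipse-hyperbola}, I set $c=\sqrt{a^2-b^2}$ and parametrize $P(\theta)=(b\sin\theta,\,0,\,a\cos\theta)$ on the ellipse and $Q_\sigma(\phi)=(0,\,b\sinh\phi,\,\sigma c\cosh\phi)$ on the branch of the hyperbola corresponding to $\sigma\in\{\pm 1\}$. Expanding $PQ^2$ in three dimensions and using $\sin^2\theta=1-\cos^2\theta$, $\cosh^2\phi-\sinh^2\phi=1$, and the confocal relation $a^2=b^2+c^2$, the mixed terms align so that the whole expression collapses into the perfect square $(a\cosh\phi-\sigma c\cos\theta)^2$. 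Since $a\cosh\phi\ge a>c\ge|\sigma c\cos\theta|$, taking the positive square root gives $PQ=a\cosh\phi-\sigma c\cos\theta$, which is the desired form with $f(P)=c\cos\theta$ and $g(Q)=a\cosh\phi$. The confocal parabolas of \eqref{parabolapor2} are handled analogously: with $(t,0,t^2/(4a)-a/2)$ and $(0,s,-s^2/(4a)+a/2)$ the squared distance collapses to $\bigl((t^2+s^2)/(4a)+a\bigr)^2$, yielding a separable formula; both parabolas are connected, so only case (a) is relevant here.

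The main obstacle is the algebraic identity that turns $PQ^2$ into a perfect square: it is precisely the confocal relation $a^2-b^2=c^2$ (and its parabolic analogue, namely the matched parameter $a$ in \eqref{parabolapor2}) that makes the cross terms combine correctly. Once that identity is recognized, the remainder of the proof amounts to careful bookkeeping of the sign $\sigma$ attached to each of $a_2,a_4$, which is exactly what distinguishes cases (a) and (b).
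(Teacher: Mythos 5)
Your proposal is correct. The core mechanism is the same as the paper's: parametrize the two conics explicitly, observe that the squared distance between a point of $C_1$ and a point of $C_2$ collapses to a perfect square thanks to the confocal relation, and conclude that the distance is a \emph{separable} expression $g(Q)\pm f(P)$, from which both identities are immediate bookkeeping. Your computation checks out: with $P=(b\sin\theta,0,a\cos\theta)$ and $Q_\sigma=(0,b\sinh\phi,\sigma c\cosh\phi)$ one indeed gets $PQ^2=c^2\cos^2\theta+a^2\cosh^2\phi-2\sigma ac\cos\theta\cosh\phi=(a\cosh\phi-\sigma c\cos\theta)^2$, and your justification of the sign of the square root ($a\cosh\phi\ge a>c\ge|c\cos\theta|$) is actually more careful than the paper's, whose $\sec$/$\tan$ parametrization produces a formally negative expression that is silently taken as the distance. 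The genuine difference is in part (b): the paper's explicit formula covers only one branch of the hyperbola, so it derives (b) indirectly --- fixing $a_1,a_3$ on the ellipse, noting that $a_1a_2-a_2a_3$ is constant on each branch by part (a), and pinning down the two constants as negatives of each other by evaluating at the foci of the ellipse (which lie one on each branch) and invoking the defining focal property of the ellipse. You instead build the branch label $\sigma\in\{\pm1\}$ into the distance formula itself, so that (b) becomes the same one-line cancellation as (a). Your route is more uniform and self-contained; the paper's route for (b) is computation-free and makes visible the remark following the theorem, namely that the statement encodes the focal characterizations of the ellipse and hyperbola. Both are complete proofs.
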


\begin{proof}
\emph{ The parabolic case}. 
Consider the most general case of two confocal parabolas with focus at $\pm\frac{a}{2}$, parametrized by
\[C_1\colon \Big(2 a t, 0, a t^2 - \frac{a}{2}\Big) \quad\quad \text{and} \quad\quad C_2\colon \Big(0, -2 a s, - a s^2+ \frac{a}{2} \Big),\] for $t, s\in\mathbb{R}$.

If we take any point $a_1=(2 a t_1, 0, a t_1^2 - \frac{a}{2})\in C_1$ and any point $a_2=(0, -2 a s_2, - a s_2^2 + \frac{a}{2})\in C_2$, then \vspace{-8pt}
\[a_1a_2=|(2 a t_1, 2 a s_2), -a + a s_2^2 + a t_1^2)|= \sqrt{ a^2 (1 + t_1^2 + s_2^2)^2}=a + at_1^2 + as_2^2.\]

If we now take any point $a_3=(2 a t_3, 0, a t_3^2 - \frac{a}{2})\in C_1$ and any point $a_4=(0, -2 a s_4,  -a s_4^2 + \frac{a}{2})\in C_2$, then \vspace{-8pt}
\begin{align*}
a_2a_3&= a + as_2^2 + at_3^2, \\
a_3a_4&= a + at_3^2 + as_4^2, \\
a_1a_4&= a + at_1^2 + as_4^2.
\end{align*}
Consequently,
\[a_1a_2+a_3a_4=a_2a_3+a_1a_4\]
as we wished.  

\medskip
\noindent \emph {The elliptic-hyperbolic case a)}.
 Let $C_1=E$ be the ellipse in the $xz$-plane parametrized by $(b \cos t, 0, a \sin t)$, $0<t\leq\pi$, where $a$ is the major axis and $b$ the minor axis. Let $C_2=H$ be the hyperbola in the $yz$-plane parametrized by $(0, a \tan s, -\sqrt{a^2 - b^2}\sec s)$, $\dfrac{\pi}{2}<s<\dfrac{3\pi}{2}$. Note that the ellipse 
$E$ has a focus at $(0,0,\pm \sqrt{a^2-b^2})$ and the hyperbola $H$ has a focus at $(0,0,\pm a)$.

Let $a_1=(b \cos t_1, 0, a \sin t_1)\in E$ and $a_2=(0, a \tan s _2, -\sqrt{a^2 - b^2}\sec s_2)\in H$.  
First, let us consider the square of the distance from $a_1$ to $a_2$:
\[\left|a_1-a_2\right|^2=b^2 \cos^2 t_1 + (a^2 - b^2) \sec ^2 s_2 + 
 2 a \sqrt{a^2 - b^2} \sec s_2 \sin t_1 + a^2 \sin^2 t_1 + b^2 \tan^2 s_2.
\]
Using the identities $\cos^2 t=1-\sin^2 t$ and $\tan^2 s=\sec^2 s -1$, we get
\begin{align}
|&a_1-a_2|^2=a^2 \sec^2 s_2 + 2 a \sqrt{a^2 - b^2} \sec s_2 \sin t_1 + (a^2 - b^2) \sin^2 t_1, \notag \\
|&a_1-a_2|^2=(a\sec s_2+\sqrt{a^2 - b^2} \sin t_1)^2, \notag \\
&a_1a_2=|(a_1-a_2|= a\sec s_2+\sqrt{a^2 - b^2} \sin t_1.\label{eq1}
\end{align}


Now let $a_3=(b \cos t_3, 0, a \sin t_3)\in E$ and $a_4=(0, a \tan s _4, -\sqrt{a^2 - b^2}\sec s_4)\in H$, where $a_2, a_4$ lie in the same component of $H$. By \eqref{eq1}, 
\[ a_3a_4= a\sec s_4+\sqrt{a^2 - b^2} \sin t_3.\]
Similarly, by \eqref{eq1},
\begin{align*}
a_3a_2&= a\sec s_2+\sqrt{a^2 - b^2} \sin t_3, \text{ and } \\
a_1a_4&= a\sec s_4+\sqrt{a^2 - b^2} \sin t_1. 
\end{align*}

Consequently, $a_1a_2+a_3a_4=a_2a_3+a_1a_4,$ as we wished.

\medskip\noindent\emph{The elliptic-hyperbolic case b)}.
This is the case in which $a_4$ is in a different component of $a_2$. Let us temporarily fix $a_1,a_3 \in E=C_1$ and let $H_1$ and $H_2$ be the two components of the hyperbola $H=C_2$ in such a way that $a_2\in H_1$ and $a_4\in H_2$.

Note that by a), \vspace{-6pt}
\[a_1a_2-a_2a_3=k, \quad  \mbox{ for every } a_2\in H_1, \]
where $k$ is a constant, and similarly, \vspace{-2pt}
\[a_1a_4-a_4a_3=\lambda, \quad  \mbox{ for every } a_4\in H_2, \]
where $\lambda$ is a constant.

Let us prove that $k=-\lambda$. For this purpose, let $s_i\in H_i$ be the foci of the ellipse $E$.
By the above, $a_1s_1-s_1a_3=k$ and $a_1s_2-s_2a_3=\lambda$. Since $s_1,s_2$ are the foci of $E$ and $a_1,a_3\in E$, we have that $k=-\lambda$.  But now $a_1a_2-a_2a_3=-(a_1a_4-a_4a_3)$,  which implies that $a_1a_2+a_1a_4=a_2a_3+a_3a_4$, as we wished. \end{proof}

Theorem \ref{unoyuno} gives geometric sense to the intuitive idea that for two confocal quadrics, each  of them is a focal curve of the other.  Indeed,  Theorem \ref{unoyuno} b) simultaneously summarizes both characteristic properties of ellipses and hyperbolas.


\section{Confocal Pea Pod Devices}\label{peapods}

\begin{defn}\label{defpeapod}
A \emph{frame of the pea pod}  $\Sigma$ in $\mathbb{R}^3$ consists of two circles in a plane $H$ whose centers lie on an \emph{axis} line $L$ such that their intersection contains a chord $I$ called the \emph{longitudinal beam}, perpendicular to $L$. Let $m=L\cap I$. We will say that a \emph{pea pod frame} is an \emph{elliptic frame} when $m$ is not between the centers of the circles, is an \emph{hyperbolic  frame} when $m$ is between the centers of the circles, and is a \emph{parabolic frame} when one of the circles is the line through $I$. 	
\end{defn}

Next, we will construct a set of elements that will make up what we call a pea pod device, which is crucial to the construction of peabodies and Theorem \ref{teomain}.

\subsection{Elliptic Pea Pod Device}

Consider an elliptic frame of a pea pod $\Sigma_e$. By definition, the point $m$ is not between the centers of the circles. Call the circle whose center $c_e$ is closer to $m$ the principal circle of the elliptic frame. The other circle is called the secondary circle.  

Consider the collection of all disks contained in the symmetric difference between the principal and secondary circles and tangent to both of them. This collection of disks is known as a Steiner chain (see for example \cite[pp.~51-54]{O}).  Observe that their centers lie in an ellipse contained in $H$ with the focus at the principal and secondary centers. We will call this curve \emph{the elliptic pea string} of $\Sigma_e$ and denote it by $C_e$; see Figure \ref{eppd}. 
 
\begin{figure}[h]
\begin{center}
\includegraphics[scale=.5]{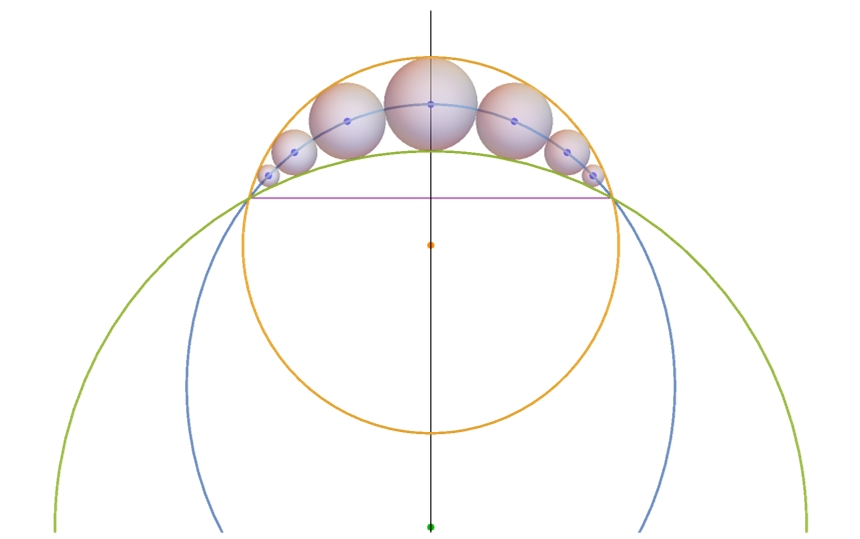}\put(-140,85){$c_e$}\put(-55,60){$C_e$}\put(-102,92){$I$}\put(-139,104){$m$}\put(-128,42){$L$}
\caption{Elliptic pea pod device $\Sigma_e$. For clarity the figure shows only few spheres (``peas'') but the reader may imagine one sphere centered at each point of the curve.}\label{eppd}
\end{center}
\end{figure}

Next, for every one of the disks in the Steiner chain and contained in the interior of the principal circle, consider the corresponding $3$-dimensional ball centered at the center of the disk and with the same radius. The collection of all these balls is called the \emph{elliptic pea pod} devise of $\Sigma_e$. The ball of the pea pod whose center lies in $L$ is called the \emph{bulb}. 

\bigskip

\subsection{Hyperbolic Pea Pod}
Now consider a hyperbolic frame $\Sigma_h$, and recall that in this case the point $m$ is between the centers of the circles. Here we will call the smaller circle of the device with center at $c_h$ the principal circle of the frame. 
Consider the collection of disks contained in the intersection between the two circles of the frame such that they are tangent to both of them. It is easy to see that their centers lie along an hyperbola with its focus at the centers of the circles. We will call this curve \emph{the hyperbolic pea string} of $\Sigma_h$ and denote it by $C_h$.  Finally, as before, for every one of these disks, we consider 
$3$-dimensional balls with the same center and the same radius. The collection of all these balls is called the \emph{hyperbolic pea pod device} of $\Sigma_h$. As before, the ball of the pea pod whose center lies in $L$ is called the bulb. See Figure \ref{hppd}.

\begin{figure}[h]
\begin{center}
\includegraphics[scale=.5]{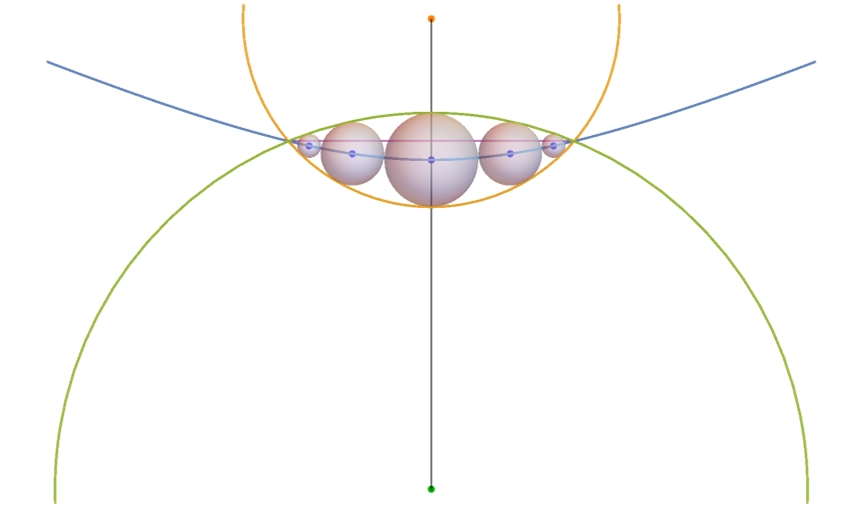}\put(-140,144){$c_h$}\put(-8,127){$C_h$}\put(-100,112){$I$}\put(-139,112){$m$}\put(-128,42){$L$}
\caption{Hyperbolic pea pod device $\Sigma_h$. }\label{hppd}
\end{center}
\end{figure}

\subsection{Parabolic Pea Pod}
In this case, $\Sigma_p$ consists of a frame where one of the circles is a line through $I$ and the other, called the principal circle of the frame,  has its center at $c_p$. Consider the collection of disks contained in the interior of the principal circle tangent to both the principal circle and the longitudinal beam. Consider only those disks that are in the half plane determined by $I$ opposite to $c_p$. It is easy to observe that in this case their centers lie along a parabola contained in $H$ with focus at $c_p$. Let us call this curve \emph{the parabolic pea string} of $\Sigma_p$ and denote it by $C_p$.  Finally, consider the $3$-dimensional balls centered at the center of all these disks and with the same radius. The collection of all these balls is called the \emph{pea pod} of $\Sigma_p$. The ball of the pea pod whose center lies in $L$ is called the bulb.
See Figure \ref{pppd}.

\begin{figure}[h]
\begin{center}
\includegraphics[scale=.51]{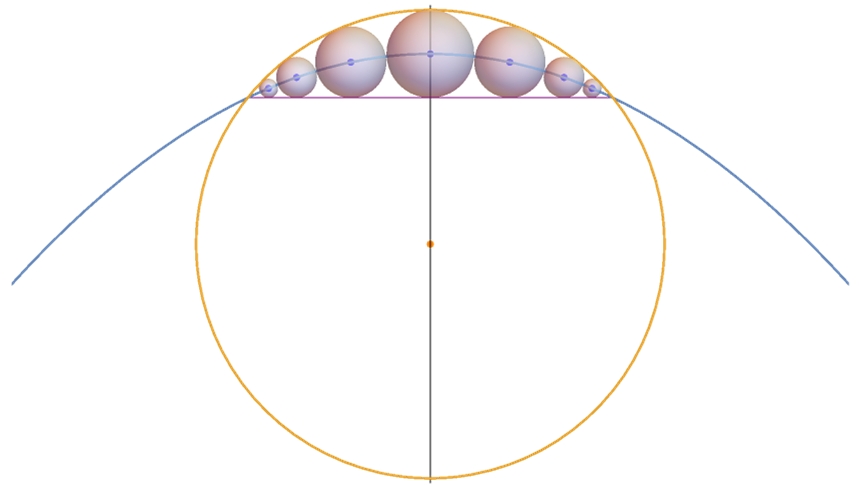}\put(-142,72){$c_p$}\put(-8,70){$C_p$}\put(-100,110){$I$}\put(-142,112){$m$}\put(-130,42){$L$}
\caption{Parabolic pea pod device $\Sigma_p$. }\label{pppd}
\end{center}
\end{figure}

For all three cases and with a slight abuse of notation, $\Sigma_i$ will denote both the frame and the pea pod device.

\subsection{Confocal Pea Pods}


\begin{defn} Two pea pod devices $\Sigma_i$ and $\Sigma_j$ in $\mathbb{R}^3$, $i,j\in \{e,h,p\}$ are \emph{confocal} if they satisfy: 
\begin{enumerate}[1)]
\item $\Sigma_i$ and $\Sigma_j$ share the same axis $L$,
\item their corresponding planes are orthogonal,  and
\item their corresponding pea strings $C_i$ and $C_j$ are confocal quadrics.
\end{enumerate}
\end{defn}

Let $\Sigma_i$ and $\Sigma_j$ be two confocal pea pod devices, $i,j\in \{e,h,p\}$. Clearly,  if $i=e$ then $j=h$ (or vice versa) or if $i=p$ then $j=p$.

Let $\Sigma_i$ be a pea pod device $i\in \{e,h,p\}$ with principal circle at center $c_i$ and radius $r_i$. Denote by $E(\Sigma_i)$ the closed sub-arc of its quadric pea string consisting of those points that are centers of disks of the pea pod.  For every $x\in E(\Sigma_i)$, let us denote the $3$-dimensional ball of the pea pod centered at $x$ by $B(x)$ and its radius by $R(x)$.

The following theorem is our main result about confocal pea pods.

\begin{teo}\label{teodevice}
Let $\Sigma_i$ and $\Sigma_j$ be two confocal pea pod devices, $i,j\in \{e,h,p\}$. 
Then for every $x\in  E(\Sigma_i)$ and $y\in E(\Sigma_j)$, 
\[ xy+ R(x)+R(y)=r_i+r_j - c_ic_j.\]
\end{teo}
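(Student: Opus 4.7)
The plan is to reduce the claimed identity to a statement about distances between a point on one pea string and a focus of the other, and then invoke Theorem \ref{unoyuno} directly. First, in each of the three cases (elliptic, hyperbolic, parabolic) the Steiner-chain disk associated with $x$ is by construction internally tangent to the principal circle in the plane $H$; since both $x$ and $c_i$ lie in $H$, this planar tangency lifts to $3$-dimensional tangency of $B(x)$ with the principal sphere, giving $|c_ix|=r_i-R(x)$. The same holds for $\Sigma_j$. Substituting $R(x)=r_i-c_ix$ and $R(y)=r_j-c_jy$ into the claim, the target equation becomes equivalent to
\[xy+c_ic_j=c_ix+c_jy.\]

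Next, the Steiner-chain description identifies $c_i$ as a focus of the pea string $C_i$ and $c_j$ as a focus of $C_j$: in the elliptic and hyperbolic cases the locus of chain-centers is a conic with foci at the two circle-centers of the frame, and in the parabolic case it is a parabola with focus at the single principal center. The confocality hypothesis then gives $c_i\in C_j$ and $c_j\in C_i$. I would apply Theorem \ref{unoyuno} to the four points
\[a_1=x\in C_i,\quad a_2=y\in C_j,\quad a_3=c_j\in C_i,\quad a_4=c_i\in C_j,\]
assigning the role of $C_1$ in Theorem \ref{unoyuno} to whichever of $C_i,C_j$ is connected (swapping $i$ and $j$ if needed). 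Part a) of Theorem \ref{unoyuno} then reads $xy+c_jc_i=yc_j+xc_i$, which is precisely the reduced identity.

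The main obstacle is verifying that one always lands in part a) rather than part b), i.e., that the relevant two points on a hyperbolic pea string lie on the same branch. The parabolic-parabolic case is automatic since both conics are connected. For the elliptic-hyperbolic case, taking $C_1$ to be the ellipse, I must show that $y$ and $c_i$ lie on the same branch of $C_j$. This will follow from a short bookkeeping along the common axis $L$: the principal (smaller) circle of a hyperbolic frame has its center closer to the chord $m$, which forces $|c_jx|<|c'_jx|$ for $x\in E(\Sigma_j)$ and thus places the pea string on the branch of $C_j$ nearer $c_j$; meanwhile, in a confocal ellipse-hyperbola pair the vertex of $C_j$ nearest $c_j$ is exactly a focus of the ellipse, and this is the one consistent with the construction's choice of principal center for $\Sigma_i$. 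Once this combinatorial alignment is confirmed, part a) applies and the identity follows at once. The hyperbolic-elliptic case is handled by the symmetric argument with the roles of $i$ and $j$ swapped.
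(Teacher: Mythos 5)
Your proposal follows essentially the same route as the paper's proof: use tangency with the principal circle to write $R(x)=r_i-c_ix$ and $R(y)=r_j-c_jy$, observe that confocality of the pea strings places $c_j$ on $C_i$ and $c_i$ on $C_j$, and apply Theorem \ref{unoyuno} a) to the alternating quadruple $x, y, c_j, c_i$. The only difference is that you explicitly flag and sketch the verification that $y$ and $c_i$ lie on the same branch in the hyperbolic case, a point the paper simply asserts; your argument is correct.
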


\begin{proof} First, recall that $E(\Sigma_i)$ and $E(\Sigma_j)$, 
are subarcs of the confocal pea quadric strings. Note that by construction
$ R (x)=r_i - c_ix$ and $R (y)=r_j - c_jy.$ 

Therefore, 
\[ xy+ R(x)+R(y)=\big(xy-c_ix-c_jy\big) + r_1+r_2.\]
Since $\{c_i, x, y, c_j\}$ are four alternate points in a pair of confocal quadrics and  $x$ and $c_j\in E(\Sigma_i)$ and $y$ and $c_i\in E(\Sigma_j)$, then each pair lies in the same component of the corresponding quadric. By Theorem  \ref{unoyuno} a), we have that
\[ xy+ R(x)+R(y)= r_i+r_j -c_ic_j,\]
which is  a constant independent of  $x$ and $y$. \end{proof}

\begin{cor}\label{cordevice}

Let $\Sigma_i$ and $\Sigma_j$ be two confocal pea pod devices, $i,j\in \{e,h,p\}$ with 
 $I=ab$ the longitudinal beam of $\Sigma_i$ and  $J=cd$ the longitudinal beam of $\Sigma_j$. Then
\[ac=ad=bc=bd=r_i+r_j -c_ic_j.\]
\end{cor}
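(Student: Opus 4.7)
The plan is to apply Theorem \ref{teodevice} at the endpoints of the arcs $E(\Sigma_i)$ and $E(\Sigma_j)$, where the peas degenerate to points of radius zero.

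First, I would identify the endpoints $a$ and $b$ of the longitudinal beam $I$ as the two degenerate points of the pea string $C_i$, uniformly across the elliptic, hyperbolic, and parabolic cases. By construction, each disk of the Steiner-type chain of $\Sigma_i$ is centered at a point $x \in C_i$ and is tangent to both frame circles of $\Sigma_i$, with radius $R(x) = r_i - c_i x$ in the elliptic case (and an analogous expression in the other two cases). As $x$ moves along $C_i$, the radius $R(x)$ varies continuously and vanishes precisely when the disk collapses to a point that is tangent to (hence lies on) both frame circles, namely at one of the two intersection points $a$ or $b$ of these circles, which are exactly the endpoints of $I$. Therefore $a, b$ are the endpoints of the closed arc $E(\Sigma_i)$ and $R(a) = R(b) = 0$. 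The same reasoning gives $c, d \in E(\Sigma_j)$ with $R(c) = R(d) = 0$.

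Second, I would invoke Theorem \ref{teodevice} for each of the four pairs $(x,y) \in \{(a,c), (a,d), (b,c), (b,d)\}$. Substituting $R(x) = R(y) = 0$ into the formula
\[ xy + R(x) + R(y) = r_i + r_j - c_i c_j \]
immediately yields $ac = ad = bc = bd = r_i + r_j - c_i c_j$, as claimed.

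The only subtle point is the justification that the degenerate endpoints $a, b$ (and $c, d$) really lie in the arcs for which Theorem \ref{teodevice} was stated, since the pea pod devices were initially defined using non-degenerate balls. This is not a real obstacle: one can either interpret the closed sub-arc $E(\Sigma_i)$ literally, so that it includes its endpoints (with $R = 0$), or argue by continuity, since both sides of the identity in Theorem \ref{teodevice} depend continuously on $x \in C_i$ and $y \in C_j$, so the identity extends from the open arc of non-degenerate centers to its closure. Modulo this mild point, the corollary is an immediate specialization of Theorem \ref{teodevice}.
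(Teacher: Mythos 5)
Your proposal is correct and matches the paper's intent: the corollary is stated without proof precisely because it is the specialization of Theorem \ref{teodevice} to the endpoints $a,b,c,d$ of the longitudinal beams, where the disks of the chains degenerate and $R$ vanishes. Your identification of these endpoints as the boundary points of the closed arcs $E(\Sigma_i)$, $E(\Sigma_j)$ with $R(a)=R(b)=R(c)=R(d)=0$ is exactly the implicit argument.
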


\begin{defn} Let $\Sigma_i$ and $\Sigma_2$ be two confocal pea pod devices, $i,j\in \{e,h,p\}$. If in addition,
 the center $c_i$ of the principal circle of one pea pod device is the center of the bulb of the other,
then we will say that they are \emph{convex} confocal pea pod devices.  The notion of convexity for confocal pea pod devices and the following observation will be relevant for the proof of Lemma \ref{lemfam}.
\end{defn}

Observe that if $\Sigma_i$ and $\Sigma_j$ are convex confocal, $i,j\in \{e,j,p\}$,  with 
 $I=ab$ the longitudinal beam of $\Sigma_i$ and $J=cd$ the longitudinal beam of $\Sigma_j$, then  the tetrahedron $abcd$ satisfies that $E(\Sigma_i)\cap abcd=\{a,b\}$,  
$E(\Sigma_j)\cap abcd=\{c,d\}$, and the orthogonal projection along $L$ of $E(\Sigma_i)$ and $I$ coincide and the orthogonal projection along $L$ of $E(\Sigma_j)$ and $J$ coincide. (See Figure \ref{confo}.)

\begin{center}   
\begin{figure}[h]

\includegraphics[scale=.7]{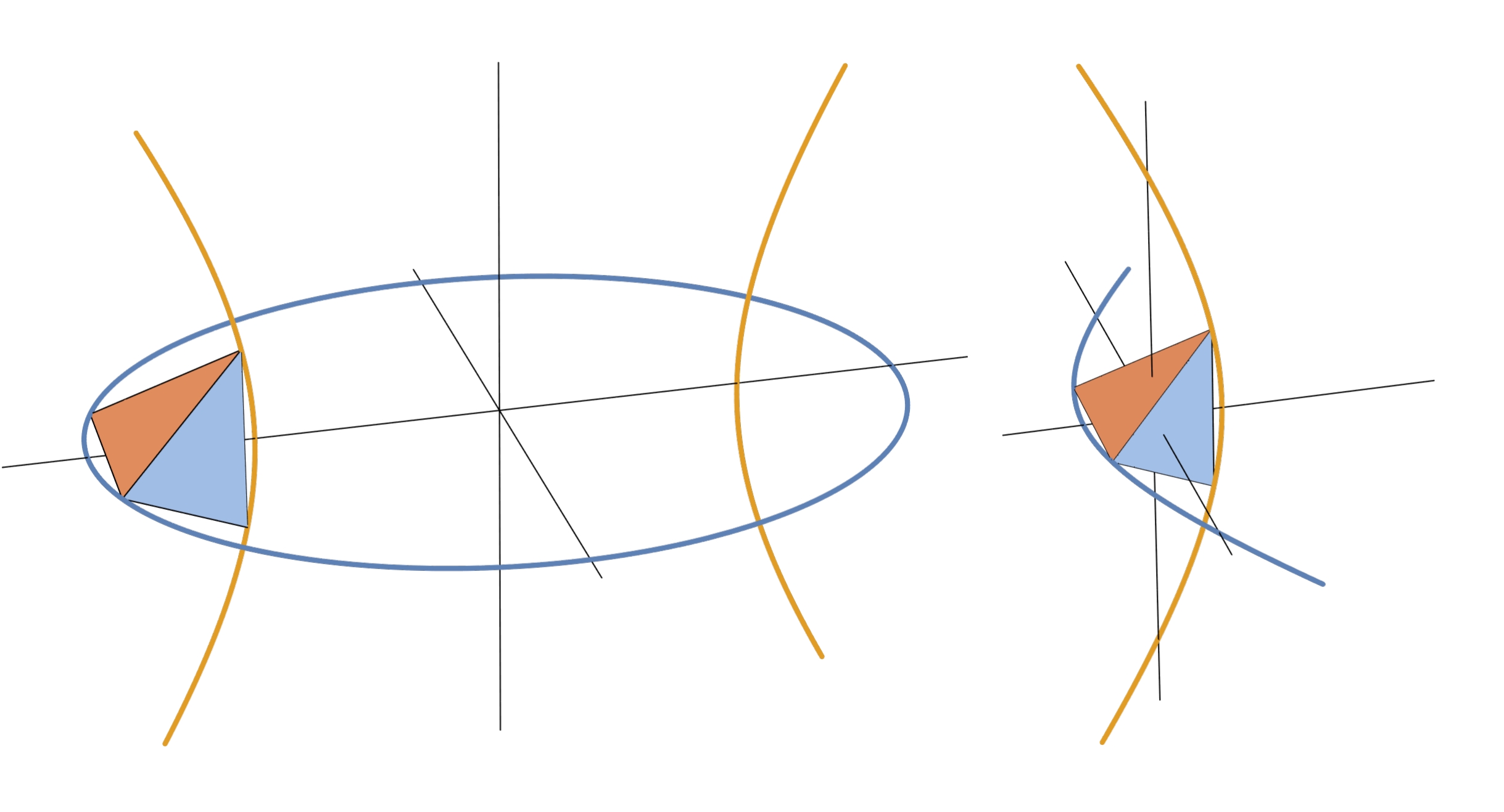}\put(-383,78){$a$}\put(-393,108){$b$}\put(-344,127){$c$}\put(-341,74){$d$}\put(-415,90){$E(\Sigma_e)$}\put(-340,100){$E(\Sigma_h)$}\put(-112,88){$a$}\put(-125,114){$b$}\put(-78,132){$c$}\put(-77,85){$d$}\put(-143,98){$E(\Sigma_i)$}\put(-75,105){$E(\Sigma_j)$}
\caption{Convex confocal pea pod devices}\label{confo}

\end{figure}
\end{center}

\subsection{The Wedge-Pod Surfaces of the Device}\label{wedges}

\begin{defn} Given a pea pod device $\Sigma$, the \emph{realization} of the pea pod of $\Sigma$, denoted by $|\Sigma|$, is defined as
\[|\Sigma|=\bigcup_{x\in E(\Sigma)} B(x).\] 

\end{defn}


As a consequence of Theorem \ref{teodevice} and Corollary \ref{cordevice}, we are able to measure the diameter between the realizations of two confocal pea pods. Recall that for two subsets $S, T$ of $\mathbb{R}^3$, the \emph{diameter} between $S$ and $T$ is defined by 
\[d(S,T)= \Sup\{ab\mid a\in S, b\in T\}.\]

\begin{lema}\label{lemS} 
Let $\Sigma_1$ and $\Sigma_2$ be two confocal pea pod devices. Then
\[d(\left|\Sigma_1\right|, \left|\Sigma_2\right|)\leq r_1+r_2 -c_1c_2.\]
\end{lema}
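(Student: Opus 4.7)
The plan is to reduce the statement directly to Theorem \ref{teodevice} by an elementary triangle-inequality argument, since the realization $|\Sigma_i|$ is by definition a union of balls whose centers and radii are exactly what Theorem \ref{teodevice} controls.

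First, I would fix arbitrary points $p \in |\Sigma_1|$ and $q \in |\Sigma_2|$. By the definition of the realization, there exist $x \in E(\Sigma_1)$ and $y \in E(\Sigma_2)$ such that $p \in B(x)$ and $q \in B(y)$; equivalently, $px \leq R(x)$ and $qy \leq R(y)$.

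Next, I would apply the triangle inequality in $\mathbb{R}^3$ to the three-segment path $p \to x \to y \to q$, giving
\[ pq \;\leq\; px + xy + yq \;\leq\; R(x) + xy + R(y). \]
By Theorem \ref{teodevice}, the right-hand side equals $r_1 + r_2 - c_1c_2$, a quantity independent of the chosen $x$ and $y$. Taking the supremum over all $p \in |\Sigma_1|$ and $q \in |\Sigma_2|$ yields
\[ d(|\Sigma_1|,|\Sigma_2|) \;\leq\; r_1 + r_2 - c_1c_2, \]
which is exactly the claim.

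There is no real obstacle in this argument; the content is entirely absorbed into Theorem \ref{teodevice}. The only point that deserves a sentence of care in the write-up is the observation that Theorem \ref{teodevice} applies uniformly to every choice of centers $x \in E(\Sigma_1)$ and $y \in E(\Sigma_2)$, so the upper bound does not depend on the representation of $p$ and $q$ as points of the constituent balls. One could also remark that the inequality is in fact an equality when $p$, $x$, $y$, $q$ are collinear with $p, q$ on the far sides of their respective balls, but this is not needed for the stated lemma.
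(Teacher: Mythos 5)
Your argument is exactly the paper's proof: pick the ball centers $x\in E(\Sigma_1)$, $y\in E(\Sigma_2)$ containing the two points, apply the triangle inequality along $p\to x\to y\to q$, and invoke Theorem \ref{teodevice} for the constant bound. Correct and complete; no differences worth noting.
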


\begin{proof} Let $w_i\in |\Sigma_i|$, $i=1,2.$ Then, $w_i\in B( x_i),$  
for some point $x_i$ in the subarc $E(\Sigma_i)$ of its pea quadric string, $i=1,2.$  Therefore, by the triangle inequality, 
$w_1w_2\leq x_1x_2+ R( x_1)+R( x_2).$
By Theorem \ref{teodevice},  $w_1w_2\leq r_1+r_2 -c_1c_2$.\end{proof}

 Let $\Sigma_1$ and $\Sigma_2$ be two confocal pea pod devices and as before 
denote by $I_1=ab$ the longitudinal beam of $\Sigma_1$ and by $I_2=cd$ the longitudinal beam of $\Sigma_2$.
For every $x$ in the sub-arc $E(\Sigma_1)$ and $y$ in the sub-arc $E(\Sigma_2)$, let  $L(x,y)$ 
be the line through $x$ and $y$. (See Figure \ref{isaac}.)

\begin{figure}[h]
\begin{center}
\includegraphics[scale=.35]{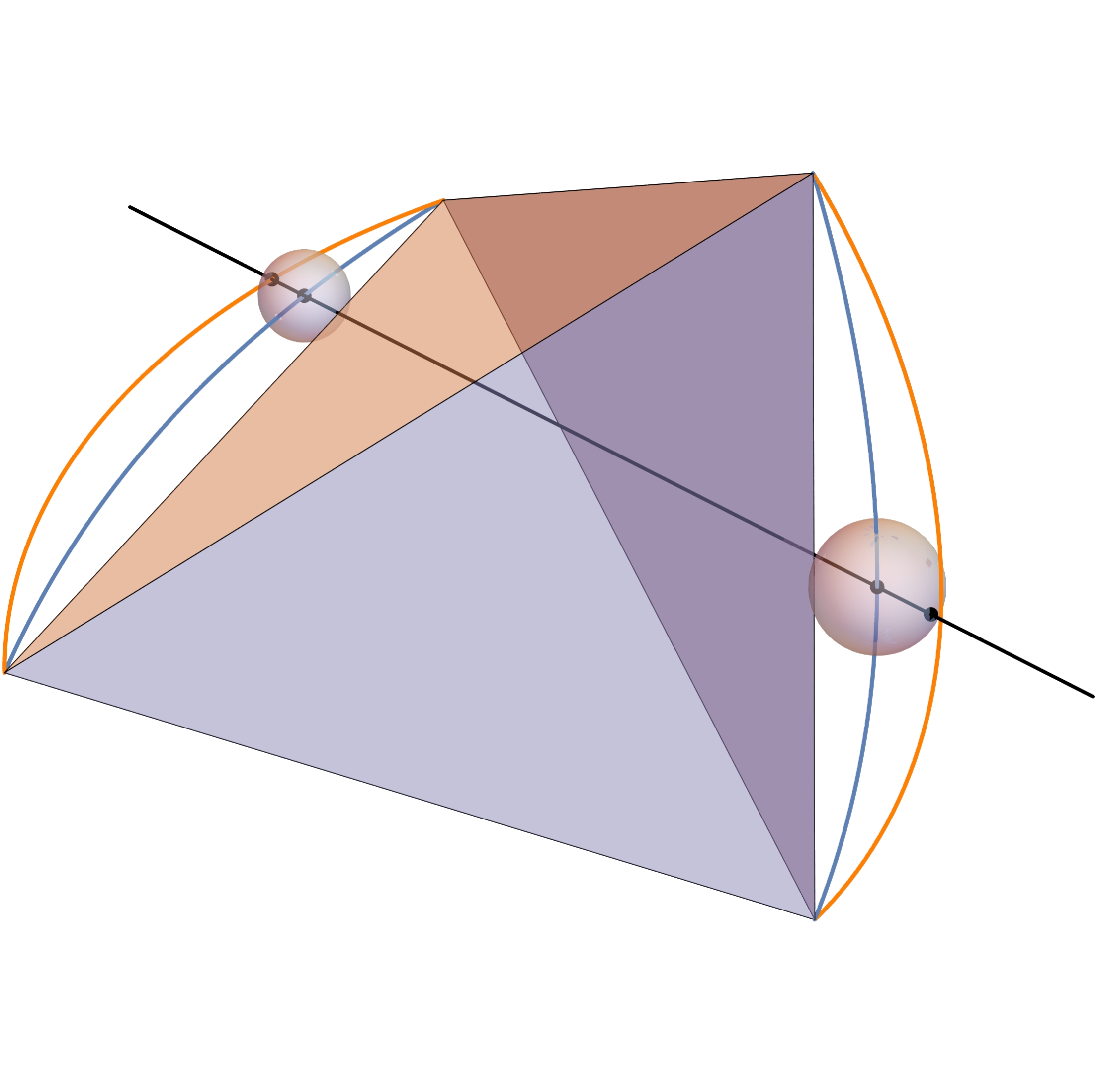}\put(2,63){$L(x,y)$}\put(-25,82){$\phi_2(x,y)$}\put(-35,88){$y$}\put(-135,137){$x$}\put(-176,132){$\phi_1(x,y)$}\put(-190,63){$a$}\put(-52,18){$d$}\put(-50,158){$c$}\put(-111,154){$b$}
\caption{The line $L(x,y)$ }\label{isaac}
\end{center}
\end{figure}

  Denote by $\phi_1(x,y)$ the point in $L(x,y)$ at a distance $R(x)$ from $x$, with $x$ between $\phi_1(x,y)$ and $y$ and let 
  $\phi_2(x,y)$ be the point in $L(x,y)$ at a distance $R(y)$ from $y$, with $y$ between $x$ and $\phi_2(x,y)$.  Note that 
  $\phi_1(x,y)\in |\Sigma_1|$, $\phi_2(x,y)\in |\Sigma_2|$ and   
$\phi_1(x,y)\phi_2(x,y)= r_1+r_2 -c_1c_2$. Therefore, $d(|\Sigma_1|, |\Sigma_2|)=r_1+r_2 -c_1c_2.$

Moreover, the interval $\phi_1(x,y)\phi_2(x,y)$ is a binormal of the convex hull  $\conv(|\Sigma_1|\cup|\Sigma_2|)$.  
Indeed, if $x\not= \{a, b\}$, then there is a unique support plane of  $\conv(|\Sigma_1|\cup|\Sigma_2|)$ at $\phi_1(x,y)$ orthogonal to $\phi_1(x,y)\phi_2(x,y)$ because this is the unique tangent plane to $B(x)$ at $\phi_1(x,y)$. Similarly, if 
$y\not=\{c,d\}$, then 
there is a unique support plane of $\conv(|\Sigma_1|\cup|\Sigma_2|)$ at $\phi_2(x,y)$ orthogonal to $\phi_1(x,y)\phi_2(x,y)$.

Note that $\phi_1(a,y)=a$ and $\phi_1(b,y)=b$ for every $y\in E(\Sigma_2)$, 
and similarly, $\phi_2(x,c)=c$ and $\phi_2(x,d)=d$ for every 
$x\in E(\Sigma_1).$  In any other case,
\[\phi_i\colon E(\Sigma_1)\times E(\Sigma_2)\to \mathbb{R}^3\text{ is an smooth embedding.}\]

Denote by $(AB)$ the image of $\phi_1\big(E(\Sigma_1)\times E(\Sigma_2)\big)$ and by $(CD)$ the image of 
$\phi_2\big(E(\Sigma_1)\times E(\Sigma_2)\big)$
and note that by the above, 
$(AB)$ and $(CD)$ are smooth surfaces with boundary,  smoothly  embedded in $\mathbb{R}^3$ and contained in the boundary of $|\Sigma_1|$ and  $|\Sigma_2|$, respectively.

On the other hand, observe that  $\tilde C_{AB}=
\{\phi_1(x,c)\mid x\in E(\Sigma_1)\}$ is a 
curve contained in the sphere of radius $r_1+r_2 -c_1c_2$ with center at $c$, connecting $a$ with $b$ (see Corollary \ref{cordevice}).  
Similarly, $\tilde D_{AB}=\{\phi_1(x,d)\mid x\in E(\Sigma_1)$\} is a curve contained in the sphere of radius $r_1+r_2 -c_1c_2$ with center at $d$, connecting $a$ with $b$.  
Thus,  the boundary of the surface $(AB)$ is 
\[\partial (AB)= \tilde C_{AB}\cup \tilde D_{AB}.\]

Likewise, $\tilde A_{CD}=
\{\phi_2(a,y)\mid y\in E(\Sigma_2)\}$ is a 
curve contained in the sphere of radius $r_1+r_2 -c_1c_2$ and center at $a$, connecting $c$ with $d$.
Similarly, $\tilde B_{CD}=\{\phi_2(b,y)\mid y\in E(\Sigma_2)\}$ is a curve contained in the sphere of radius $r_1+r_2 -c_1c_2$ with center at $b$, connecting $c$ with $b$.  Thus,  the boundary of the surface $(CD)$ is 
\[\partial (CD)= \tilde A_{CD}\cup \tilde B_{CD}.\]

The two surfaces with boundaries $(AB)$ and $(CD)$ are called the \emph{wedge-pod surfaces} of the confocal pair of pea pod devices $\Sigma_1$ and $\Sigma_2$.   Note that if we fix an $x=x_0$ while $y$ varies in $E(\Sigma_2)$, then $\phi_1(x_0,y)$ is an arc of circle contained in $(AB)$, while if we fix $y=y_0$ while $x$ varies in $E(\Sigma_1)$, then $\phi_2(x,y_0)$ is an arc of circle contained in $(CD)$.

In summary, we have the following lemma.

\begin{lema}\label{lemY}
Let $(AB)$ and $(CD)$  be the wedge-pod surfaces of the confocal pair of pea pod devices $\Sigma_1$ and $\Sigma_2$. 
Then $d\big((AB),(CD)\big)=r_1+r_2 -c_1c_2$. Moreover, 
for every point $u\in (AB)$, there is a point $v\in (CD)$ such that $uv=r_1+r_2 -c_1c_2$, and there is a   plane tangent to $(AB)$ at $u$ and a  plane tangent to $(CD)$ at $v$ orthogonal to  $uv$. Furthermore, if  $u\in (AB)\setminus \{a,b\}$, then the tangent plane of $(AB)$ at $u$ is unique, and similarly if $v\in (CD)\setminus \{c,d\}$, then the tangent plane of $(CD)$ at $v$ is unique. 
\end{lema}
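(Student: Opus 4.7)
The plan is to assemble the lemma from the observations already recorded in the paragraphs immediately preceding it; essentially all of the work has been done by Theorem \ref{teodevice} and Lemma \ref{lemS}, and the proof becomes a short, four-step verification.

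First, the upper bound $d((AB),(CD)) \leq r_1+r_2-c_1c_2$ is immediate from Lemma \ref{lemS}, since by construction $(AB) \subset |\Sigma_1|$ and $(CD) \subset |\Sigma_2|$. Second, for each $u \in (AB)$ I will produce a partner $v \in (CD)$ realizing equality. Writing $u = \phi_1(x,y)$ with $(x,y) \in E(\Sigma_1) \times E(\Sigma_2)$, I set $v = \phi_2(x,y) \in (CD)$. Then $u,x,y,v$ lie in this order on the line $L(x,y)$, so $uv = R(x) + xy + R(y)$, which equals $r_1+r_2-c_1c_2$ by Theorem \ref{teodevice}. This both furnishes the partner $v$ and supplies the matching lower bound for the diameter, giving equality throughout.

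Third, for the orthogonality of the tangent planes I observe that $uv$ is directed along the radius $xu$ of the ball $B(x)$, so the plane $T_u$ through $u$ perpendicular to $uv$ is the tangent plane to the sphere $\partial B(x)$ at $u$. By the binormal discussion preceding the lemma, $T_u$ is a support plane of $\conv(|\Sigma_1|\cup|\Sigma_2|)$ at $u$, and hence a tangent plane to the surface $(AB) \subset \partial |\Sigma_1|$ at $u$. The symmetric argument, applied to $v$ and $B(y)$, yields a plane tangent to $(CD)$ at $v$ that is again orthogonal to $uv$.

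Finally, uniqueness away from the four vertices is handled by invoking the fact recorded just above the lemma that $\phi_1$ is a smooth embedding whenever $x \notin \{a,b\}$, so $(AB)$ is a smoothly embedded surface with boundary at every $u \in (AB) \setminus \{a,b\}$; its tangent plane at $u$ is therefore unique and must coincide with $T_u$. The analogous argument for $\phi_2$ handles $v \in (CD) \setminus \{c,d\}$. The only delicate point in the whole argument is that in the degenerate fibers $x \in \{a,b\}$ or $y \in \{c,d\}$ the parameterization collapses to a single point, so the tangent plane genuinely fails to be unique there; but this is precisely the case the lemma excludes from its uniqueness assertion, so no real obstacle arises.
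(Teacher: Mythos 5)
Your proof is correct and follows essentially the same route as the paper, which presents this lemma as a summary of the immediately preceding discussion: the upper bound from Lemma \ref{lemS}, equality via the collinear points $\phi_1(x,y)$, $x$, $y$, $\phi_2(x,y)$ together with Theorem \ref{teodevice}, tangency from the spheres $B(x)$ and $B(y)$, and uniqueness from the smooth embedding away from $\{a,b\}$ and $\{c,d\}$. Nothing further is needed.
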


Our strategy is now to construct the boundary of a body of constant width  by assembling wedge-pod surfaces together with sphere 
caps. See Figures \ref{patric} and \ref{topback}.

 \section{ Assembling  a Peabody of Constant Width}\label{assembling}

 Throughout the rest of the paper, we will assume that $\Sigma_1$ and $\Sigma_2$ are convex confocal pea pod devices with longitudinal beams $I_1$ and $I_2$.  Moreover, $I_1=ab$ and $I_2=cd$  are opposite sides of a regular tetrahedron $abcd$ of length, say $2$, and consequently by Corollary \ref{cordevice}, $r_1+r_2 -c_1c_2=2$.  Furthermore, by the convexity of the confocal pea pod devices, $E(\Sigma_i)\cap abcd=\{a,b\}$,  
$E(\Sigma_j)\cap abcd=\{c,d\}$, the orthogonal projections along $L$ of $E(\Sigma_1)$ and $ab$ coincide and the orthogonal projections along $L$ of $E(\Sigma_2)$ and $cd$ coincide.

 Next, for every other pair of opposite sides of the regular tetrahedron $abcd$, we will choose confocal pea pod devices, having them as longitudinal beams,  and as in the previous section  we construct six wedge-pod surfaces $(AB), (CD), (AC), (BD), (AD), (BC)$.   Each  of these surfaces has $\partial (AB)$, $\partial (CD)$, $\partial (AC)$, $\partial (BD)$, $\partial (AD)$ and $\partial (BC)$ as a boundary, with its corresponding pair of curves defined in the previous section.

   
Denote by  $\mathbb{S}(x,2)$ the sphere with center at $x$ and radius $2$. Observe that 
\[\tilde A_{BC}\cup \tilde A_{CD}\cup\tilde A_{DB}\subset \mathbb{S}(a,2)\] 
is a simple closed curve. 
Let $(A)$ be the spherical cap of $\mathbb{S}(a,2)$ bounded by the curve $\tilde A_{BC}\cup \tilde A_{CD}\cup\tilde A_{DB}$
in such a way that $(A)$ is contained in the boundary of the Reuleaux tetrahedron with vertices $\{a,b,c,d\}$. For the definition of the Reuleaux tetrahedron, see the paragraph preceding Lemma \ref{lemdia} and  \cite[Section 8.2]{MMO}).

In a similar way we define the sphere cap $(B)$ of   $\mathbb{S}(b,2)$ with boundary 
$\tilde B_{CD}\cup \tilde B_{DA}\cup\tilde B_{AC}$, the sphere cap  
$(C)$ of $\mathbb{S}(c,2)$ with boundary
$\tilde C_{AB}\cup \tilde C_{BD}\cup\tilde C_{DA}$, 
and  the sphere cap $(D)$ of $\mathbb{S}(d,2)$ with boundary 
$\tilde D_{AB}\cup \tilde D_{BC}\cup\tilde D_{CA}.$ 
  

Note that if $u\in \tilde A_{(BC)}\setminus\{b,c\}$, then the line through $au$ is a unique line normal to 
both surfaces, $(A)$ and $(BC)$. This allows us to glue together all these surface pieces; 
four sphere caps, one for each vertex, and six wedge-shaped surfaces, called wedge-pod surfaces, one for each side of the tetrahedron. In this way, we obtain a smooth submersion $\xi$ from the sphere  $\mathbb{S}^2$ into $\mathbb{R}^3$, 
\[\xi\colon \mathbb{S}^2\to \mathbb{R}^3\] 
with image $\xi(\mathbb{S}^2)=\Im$. (See  Figure \ref{topback}.)

\begin{figure}[h]
\begin{center}
\includegraphics[scale=.5]{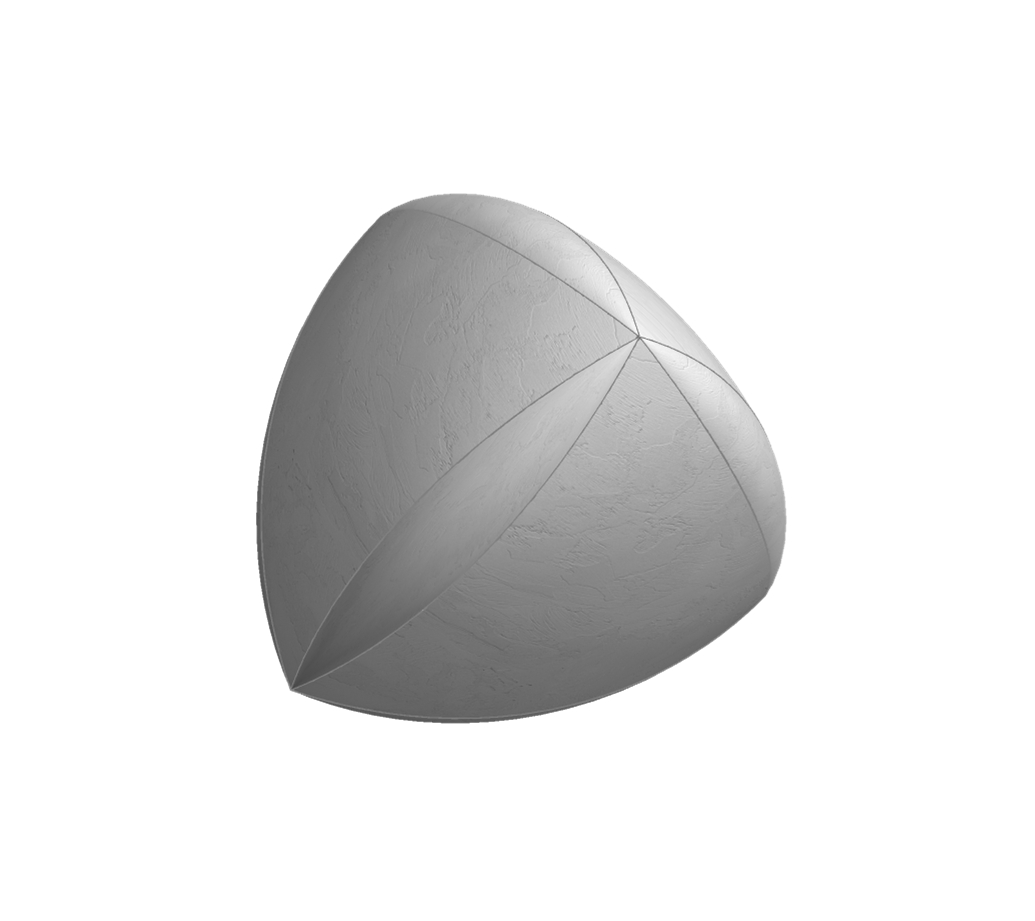}\put(-230,60){$d$}\put(-117,176){$c$}\put(-186,118){$(DC)$}\put(-140,90){$(A)$}\put(-210,150){$(B)$}\put(-172,214){$(AC)$}\put(-86,132){$(BC)$}\put(-180,142){$\tilde B(DC)$}\put(-196,92){$\tilde A(DC)$}\put(-168,192){$\tilde B(AC)$}\put(-118,142){$\tilde A(BC)$}
\caption{Assembling a peabody
}\label{topback}
\end{center}
\end{figure}

A consequence of Lemma \ref{lemY}, the following lemma  emphasizes the fundamental property of the smooth submersion, $\xi\colon \mathbb{S}^2\to \mathbb{R}^3$.

\begin{lema}\label{lemnormal}
For every point $x\in \mathbb{S}^2$,  there is a unique point $y\in \mathbb{S}^2$ such that the length of $\xi(x)\xi(y)$ is $2$ and 
the planes orthogonal to $\xi(x)\xi(y)$ at $\xi(x)$ and $\xi(y)$ are tangent planes of the smooth submersion $\Im$.  Furthermore, if  
$\xi(x)$ is different from $\{a,b,c,d\}$, the tangent plane of the submersion $\Im$ at $\xi(x)$ is unique. 
\end{lema}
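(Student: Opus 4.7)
The plan is to do a case analysis based on which of the ten pieces of $\Im$ contains $\xi(x)$: the four spherical caps $(A), (B), (C), (D)$ and the six wedge-pod surfaces $(AB), (AC), (AD), (BC), (BD), (CD)$. These pieces meet pairwise along the boundary curves $\tilde C_{AB}, \tilde D_{AB}, \ldots$ of Section \ref{wedges} and triply at the four vertices $a, b, c, d$; the domain $\mathbb{S}^2$ is correspondingly partitioned into ten regions with matching seams. It therefore suffices to define $y$ and verify the distance and tangent-plane conditions on each piece separately, and then check consistency across the seams.

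I would first treat the two generic cases. If $\xi(x)$ lies in the interior of a wedge-pod surface, say $(AB)$, then Lemma \ref{lemY} supplies directly a unique $v \in (CD)$ with $|\xi(x)v| = r_1 + r_2 - c_1 c_2 = 2$ and both tangent planes perpendicular to the segment $\xi(x)v$; it also gives the uniqueness of the tangent plane at $\xi(x)$ since $\xi(x) \notin \{a,b\}$. I would set $\xi(y) = v$. If instead $\xi(x)$ lies in the interior of a spherical cap, say $(A)$, then $\xi(x) \in \mathbb{S}(a,2)$, so $|\xi(x) - a| = 2$; the tangent plane to $(A)$ at $\xi(x)$ is perpendicular to $\xi(x) - a$; and the direction $a - \xi(x)$ lies in the outward normal cone of $\Im$ at the vertex $a$ (as a positive combination of the outward normals of the three caps $(B), (C), (D)$ meeting at $a$), so there is a support plane at $a$ perpendicular to this segment. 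I would take $\xi(y) = a$.

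Consistency across the seams relies on the boundary conventions for $\phi_1, \phi_2$ from Section \ref{wedges}. For instance, a point of $\tilde C_{AB}$, lying on both $\partial(AB)$ and $\partial(C)$, has the form $\phi_1(x,c)$ for some $x \in E(\Sigma_1)$, and its opposite as a wedge-pod point is $\phi_2(x,c) = c$, coinciding with the opposite $c$ coming from the cap $(C)$ viewpoint. The two tangent planes at that seam point also agree: a short computation using Theorem \ref{teodevice} with $R(c) = 0$ gives $|x - c| = 2 - R(x)$, so the pea-pod ball $B(x)$ is internally tangent to $\mathbb{S}(c,2)$ at $\phi_1(x,c)$, and hence their common tangent plane is the tangent plane both of the wedge and of the cap at this seam point. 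Symmetric arguments along $\tilde D_{AB}, \tilde A_{CD}, \tilde B_{CD}$ and the seams of the remaining four edges establish the uniqueness of the tangent plane of $\Im$ at every $\xi(x) \notin \{a, b, c, d\}$.

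The vertex case $\xi(x) \in \{a, b, c, d\}$ is dual to the cap case: the preimage $\xi^{-1}(a) \subset \mathbb{S}^2$ parametrizes the outward normal cone of $\Im$ at $a$, and a normal direction $\nu$ in this cone corresponds to a unique $\xi(y) = a - 2\nu$ on the opposite spherical cap $(A)$, with the required tangent-plane condition at $\xi(y)$ satisfied by the sphere property; the preimage $y$ is then unique because $\xi$ is injective on the interior of $(A)$. The main obstacle is precisely this gluing bookkeeping: one must verify that the assignment $x \mapsto y$ and the tangent planes fit coherently across all ten seams and at the four vertex singularities. This reduces to (i) the internal tangency of each pea-pod ball $B(x)$ to $\mathbb{S}(c,2)$ along $\tilde C_{AB}$ (a Steiner-chain property combined with Corollary \ref{cordevice}), and (ii) the identification of the outward normal cone at each vertex with the opposite spherical cap under the antipodal map, which is exactly the constant-width condition at that vertex.
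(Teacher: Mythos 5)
Your proof is correct and follows essentially the same route as the paper, which offers no formal proof but presents the lemma as a direct consequence of Lemma \ref{lemY} together with the gluing observation that along each seam curve such as $\tilde A_{BC}$ the line through the opposite vertex is simultaneously normal to the spherical cap and to the wedge-pod surface. Your ten-piece case analysis, the internal-tangency computation $xc = 2 - R(x)$ at the seams, and the identification of the normal cone at each vertex with the antipode of the opposite cap simply make explicit the details the paper leaves to the reader.
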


\subsection{The Diameter of the Smooth Submersion  $\Im$}

In this section we would like to prove that $\Im$ is the boundary of a body of constant width. For this purpose, it is essential to prove that the diameter of $\Im$ is $2$. 

We begin by considering the following set of lines $L(x,y)$, corresponding to the opposite sides $ab$ and $cd$ of the tetrahedron $abcd$,
\[\mathcal{L}_{ab,cd}=\{L(x,y)\mid x\in E(\Sigma_1)\setminus \{a,b\} \text{ and }  y\in E(\Sigma_2)\setminus \{c,d\}\}.\]
Define $\mathcal{L}_{ac,bd}$ and $\mathcal{L}_{ad,bc}$ similarly, and let $\mathcal{L}=\mathcal{L}_{ab,cd}\cup \mathcal{L}_{ac,bd}\cup \mathcal{L}_{ad,bc}$.

\begin{lema}\label{lemfam}
The three families $\mathcal{L}_{AB,CD}, \mathcal{L}_{AC,BD},$ and  $\mathcal{L}_{AD,BC}$ are pairwise disjoint 
\end{lema}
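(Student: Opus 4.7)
The plan is to use the four face planes of the regular tetrahedron $abcd$ to produce a sign-vector invariant whose combinatorics make the intersection of families impossible. Label the face planes $F_A=abc$, $F_B=abd$, $F_C=acd$, $F_D=bcd$, and for any point $p\in\mathbb{R}^3$ let $\sigma(p)\in\{\pm 1\}^4$ record, coordinate by coordinate, whether $p$ lies in the open halfspace outside that face (with sign $+$ meaning outside). It suffices, by the symmetry of the statement under permutations of the three pairs of opposite edges, to rule out that $\mathcal{L}_{ab,cd}\cap\mathcal{L}_{ac,bd}\neq\emptyset$.

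The first technical step is to verify, using the convexity assumption that each pea string $E(\Sigma_i)$ meets the tetrahedron exactly at the two endpoints of its longitudinal beam, that every non-endpoint point of a pea string lies \emph{strictly} outside the two tetrahedron faces whose common edge is its longitudinal beam, and \emph{strictly} inside the other two face halfspaces. So, for example, every $x_1\in E(\Sigma_1^{ab,cd})\setminus\{a,b\}$ satisfies $\sigma(x_1)=(+,+,-,-)$. Now suppose for contradiction that some line $\ell$ belongs to both $\mathcal{L}_{ab,cd}$ and $\mathcal{L}_{ac,bd}$. Then $\ell$ contains four distinct points whose sign vectors, listed in the order $(F_A,F_B,F_C,F_D)$, are
\[
\sigma(x_1)=(+,+,-,-),\quad \sigma(y_1)=(-,-,+,+),\quad \sigma(x_2)=(+,-,+,-),\quad \sigma(y_2)=(-,+,-,+),
\]
and these split as two distinct antipodal pairs in the hypercube $\{\pm 1\}^4$, each at Hamming distance $4$.

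The combinatorial kill is then direct. Since $\sigma(x_1)$ and $\sigma(y_1)$ differ in every coordinate, the line $\ell$ must cross each of the four face planes (and by the strict inequalities above cannot be contained in any of them). A line crosses each plane at most once, so along $\ell$ each coordinate of $\sigma$ flips exactly once and the total Hamming distance traversed is exactly $4$. In any walk in $\{\pm 1\}^4$ of total Hamming length $4$, two vertices can be at Hamming distance $4$ only if all four coordinate flips occur strictly between them, which forces them to be the two extreme endpoints of the walk; hence the walk contains at most one antipodal pair of vertices at distance $4$, contradicting the two such pairs found above. I expect the main obstacle to be the careful strict-sign verification: one must rule out a non-endpoint pea-string point lying accidentally on a second tetrahedron face plane, which is precisely what the convexity hypothesis is designed to prevent.
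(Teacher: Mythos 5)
Your argument is correct, but it reaches the conclusion by a genuinely different route than the paper. The paper works in the space $\mho$ of lines not parallel to any face of $abcd$: it asserts (without proof) that $\mho$ has seven connected components, three of which consist of lines projecting the vertices to convex position, shows each family lies in $\mho$, and then uses connectedness of each family to place the three families in three distinct components. You replace that topological argument with a direct combinatorial one: the sign vector in $\{\pm 1\}^4$ relative to the four face planes, the fact that a line transversal to all four planes flips each coordinate exactly once, and the resulting observation that a single line can carry at most one antipodal pair of strict sign vectors, whereas a line in two of the families would carry two. This buys elementarity and self-containment --- you avoid both the unproved seven-component count and the connectedness of the families --- at the price of the cleaner conceptual picture that the three families inhabit three different chambers of line space. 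Note that both proofs ultimately rest on the same geometric input about where the pea strings sit relative to the face planes, and here your stated hypothesis is slightly too weak: the condition $E(\Sigma_i)\cap abcd=\{a,b\}$ alone does not force $\sigma(x_1)=(+,+,-,-)$, since an arc from $a$ to $b$ avoiding the tetrahedron could in principle lie on the wrong side of a face plane through $ab$. You also need the rest of the convexity data the paper records for convex confocal devices (the pea string is a conic arc whose orthogonal projection along the axis $L$ is the longitudinal beam, bulging away from the opposite edge); with that, the sign computation is a short coordinate check and your four vectors, and the hypercube-walk contradiction, are exactly right. The paper's own proof leans on the same unelaborated convexity step, so this is not a defect peculiar to your approach.
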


\begin{proof}
Let $\mho$ be the space of all lines in $\mathbb{R}^3$ except for those lines parallel to a face of the tetrahedron $abcd$.
We shall prove that  $\mathcal{L}_{ab,cd}$, $\mathcal{L}_{ac,bd}$ and $\mathcal{L}_{ad,bc}$ lie in different components of $\mho$. 
The space $\mho$ consists precisely of those lines with the property that the orthogonal projection along them maps the vertices of the tetrahedron $abcd$ to four points in general position. Therefore, $\mho$ has seven connected components and three of them correspond to those lines with the property that the orthogonal projection along them maps the vertices of the tetrahedron $abcd$ to four points in convex position. Indeed, if  $L_1$ is the line through the midpoints of $ab$ and $cd$, $L_2$ is the line through the midpoints of $ac$ and $bd$ and $L_3$ is the line through the midpoints of $ad$ and $bd$, then one of $L_1$, $L_2$ and $L_3$ lies  in each of these three connected components of $\mho$.

We will next prove that $\mathcal{L}_{ab,cd}\subset \mho$. By symmetry, it is enough to verify that $L(x,y)\in \mathcal{L}_{ab,cd}$ is not parallel to the face $abc$ of the tetrahedron. Suppose $L(x,y)\subset \Gamma$, where $\Gamma$ is a plane parallel to the face $abc$. If the plane through $abc$ separates $\Gamma$ from $d$, then $\Gamma$ does not intersect $E(\Sigma_2)$, but if the plane through $abc$ does not separate $\Gamma$ from $d$, then $\Gamma$ does not intersect $E(\Sigma_1)$. This is so because of the convexity of the confocal pea pod devices $\Sigma_1$ and $\Sigma_2$. If $\Gamma$ is the plane through $abc$, then $L(x,y)\notin\mathcal{L}_{ab,cd}$, because $y=c$ and $x$ is equal to either $a$ or to $b$.
Similarly, $\mathcal{L}_{ac,bd}\subset \mho$ and $\mathcal{L}_{ad,bc}\subset \mho$.

The lemma follows now from the fact that these three sets $\mathcal{L}_{ab,cd}$, $\mathcal{L}_{ac,bd}$ and $\mathcal{L}_{ad,bc}$ are connected subsets of $\mho$, and each of $L_1\in \mathcal{L}_{ab,cd}$, $L_2\in \mathcal{L}_{ac,bd}$ and $L_3\in \mathcal{L}_{ac,bd}$  lies in a  different connected component of $\mho$.\end{proof}

Consider the tetrahedron $abcd$ with sides $2$ and having, at each of its vertices, a ball of radius $2$ centered at the vertex and containing the other three vertices on its boundary. It is well known that the intersection of these four balls of radius $2$ is the Reuleaux tetrahedron $T$.
It contains the tetrahedron $abcd$. and the singularities of the boundary of $T$ are an embedded copy of the complete graph  $K_4$, where the six edges are arcs of a circle and the vertices are $\{a,b,c,d\}$. Furthermore, the smooth pieces of the boundary of $T$ consist of four spherical caps of radius $2$. See Figure \ref{RT}. For more about the Reuleaux tetrahedron $T$, see \cite[Section 8.2]{MMO}.


\begin{figure}[h]
\centering\includegraphics[scale=.5]{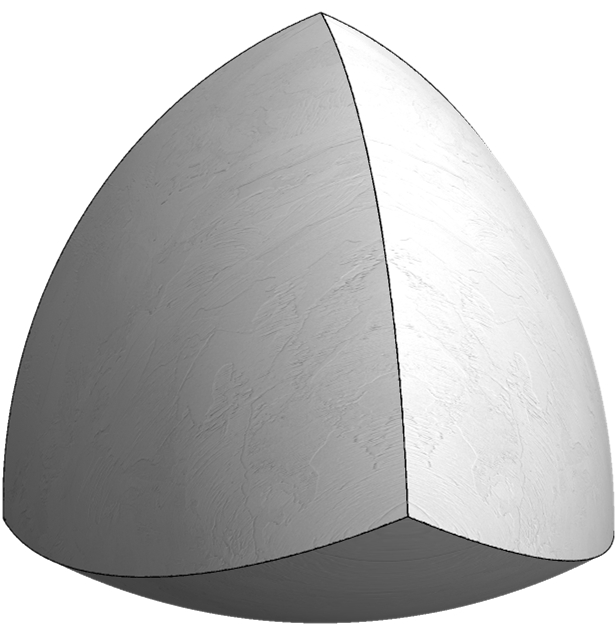}\put(-90,190){$a$}\put(-60,35){$b$}\put(2,30){$c$}\put(-192,32){$d$}
\caption{The Reuleaux tetrahedron}
 \label{RT}
 \end{figure}


\begin{lema}\label{lemdia}
$\Im$ is contained in the Reuleaux tetrahedron $T$. 
\end{lema}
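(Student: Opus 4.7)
The plan is to decompose $\Im$ into its twelve constituent pieces --- the four spherical caps $(A),(B),(C),(D)$ and the six wedge-pod surfaces --- and show that each lies in every one of the four balls $\bar B(v,2)$ for $v\in\{a,b,c,d\}$, since $T$ is their intersection.

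The spherical caps are immediate from the construction: by definition $(A)$ is chosen as the cap of $\mathbb S(a,2)$ bounded by $\tilde A_{BC}\cup\tilde A_{CD}\cup\tilde A_{DB}$ that lies in the boundary of the Reuleaux tetrahedron, so $(A)\subset T$, and the caps $(B),(C),(D)$ are handled by relabeling.

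For the wedge-pod surfaces, the same relabeling argument reduces the problem to $(AB)\subset T$. Every point of $(AB)$ lies in some ball $B(x)$ with $x\in E(\Sigma_{ab})$, so it suffices to show that each such $B(x)$ lies in all four balls $\bar B(v,2)$. For $v=c$ and $v=d$, Theorem~\ref{teodevice} applied to the convex confocal pair $\Sigma_{ab},\Sigma_{cd}$ with $y=c$ and $y=d$ (both endpoints of $E(\Sigma_{cd})$, so $R=0$) yields $\|x-c\|+R(x)=\|x-d\|+R(x)=2$; hence $B(x)\subset\bar B(c,2)\cap\bar B(d,2)$.

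The main obstacle is the remaining containment $B(x)\subset\bar B(a,2)\cap\bar B(b,2)$. The reflection of $\Sigma_{ab}$ that exchanges $a$ and $b$ makes these two statements equivalent, so I would reduce to the scalar inequality $\|x-a\|+R(x)\le 2$ for every $x\in E(\Sigma_{ab})$. The bound is sharp and attained only at the endpoints: trivially at $x=a$, where both terms vanish, and at $x=b$, where $R(b)=0$ and $\|b-a\|=2$. To prove it I would use the convexity condition built into the definition of convex confocal devices --- the arc $E(\Sigma_{ab})$ meets the tetrahedron $abcd$ only at $a$ and $b$, and projects along $L$ onto the segment $ab$ --- together with the focal identity on the pea-string conic $C_{ab}$ and the tangency relation $\|x-c_1\|+R(x)=r_{ab}$. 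The difficulty is that every four-tuple of alternating points on $C_{ab},C_{cd}$ that one can feed into Theorem~\ref{unoyuno} yields an identity already implied by Theorem~\ref{teodevice}, so the scalar bound does not reduce cleanly to those theorems; a tailored geometric argument --- perhaps a monotonicity check along a parametrization of $E(\Sigma_{ab})$ from $a$ through the bulb to $b$ --- appears to be required.
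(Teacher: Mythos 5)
Your reduction is sound and follows the paper's skeleton: the spherical caps lie in $T$ by construction, symmetry reduces everything to $(AB)$, and the containment of $(AB)$ in $\bar B(c,2)\cap\bar B(d,2)$ follows from Theorem~\ref{teodevice} with $y=c,d$ exactly as you say (the paper cites the proof of Lemma~\ref{lemS}, which is the same computation). But the containment in $\bar B(a,2)$ and $\bar B(b,2)$ is the entire content of the lemma, and you stop short of it: you correctly isolate the scalar inequality $xa+R(x)\le 2$ for $x\in E(\Sigma_1)$ and then declare that a tailored argument ``appears to be required'' without supplying one. That is a genuine gap, not a routine verification left to the reader.

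For the record, the paper closes this step variationally: let $w$ be the point of $(AB)$ farthest from $a$; if $w=\phi_1(x,y)$ is not a vertex, the unique normal line to $(AB)$ at $w$ is $L(x,y)$, and at a farthest point this normal must pass through $a$; one then argues that $a\in L(x,y)$ forces the degenerate configuration $x=b$, $R(x)=0$, so the maximum distance is $ab=2$. Your own reduction can also be completed, arguably more cleanly than you feared: since Theorem~\ref{teodevice} with $y=c$ gives $xc+R(x)=2$, the inequality $xa+R(x)\le 2$ is equivalent to $xa\le xc$, i.e.\ to $E(\Sigma_1)$ lying in the closed half-space bounded by the perpendicular bisector plane of the segment $ac$ and containing $a$. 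Writing $x=(u,0,f(u))$ in coordinates where $L$ is the $z$-axis and $a=(1,0,h)$, $b=(-1,0,h)$, $c=(0,1,-h)$, $d=(0,-1,-h)$ with $2h^2=1$, one gets $xc^2-xa^2=4f(u)h+2u\ge 4h^2-2=0$, using that the convexity of the devices forces $f(u)\ge h$ (the arc $E(\Sigma_1)$ projects along $L$ onto $ab$ and meets the tetrahedron only at $a,b$, hence bulges away from $cd$), with equality only at $x=b$. Some argument of this kind must appear; without it the lemma is unproved.
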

\begin{proof} By symmetry, it will be enough to show that $(AB)$ is contained in the Reuleaux tetrahedron $T$. 
By the proof of Lemma \ref{lemS}, it is obvious that $(AB)$ is contained in the interior of $\mathbb{S}(c,2)$ and the interior of $\mathbb{S}(d,2)$.
Then it is left to show that $(AB)$ is contained in the interior of $\mathbb{S}(a,2)$ and in the interior of $\mathbb{S}(b,2)$ as well. 
Let $w$ be the point in $(AB)$ with the property that it is the furthest point away from $a$.  Suppose $w\in B(x)$ for some $x\in E(\Sigma_1)$. Then the unique line normal to $(AB)$ at $w$ must contain $a$ and $x$, the center of $B(x)$, but then 
$B(x)$ is the ball of radius zero centered at $b$ and hence $w=b$. This implies that $(AB)$ is contained in the interior of $\mathbb{S}(a,2)$. Simillarily, $(AB)$ is contained in the interior of $\mathbb{S}(b,2)$. \end{proof}


\begin{lema}\label{lemdia2}
The diameter of $\Im$ is $2$.
\end{lema}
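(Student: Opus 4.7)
The plan is a compactness/extremum argument combined with the bi-normal structure of Lemma \ref{lemnormal}.

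The lower bound $\mathrm{diam}(\Im)\geq 2$ is immediate, since the vertices $a,b\in\Im$ satisfy $|ab|=2$. For the upper bound, by compactness of $\Im$ pick a pair $(u,v)\in\Im\times\Im$ maximizing $|u-v|$. If one of $u,v$ is a vertex of $abcd$, say $u=a$, then by Lemma \ref{lemdia} we have $v\in\Im\subset T\subset\bar B(a,2)$, so $|u-v|\leq 2$. Otherwise $u$ and $v$ are smooth points of $\Im$, where the tangent planes are unique (the last clause of Lemma \ref{lemnormal}). The first-order optimality conditions $(v-u)\perp T_u\Im$ and $(u-v)\perp T_v\Im$ then assert that the segment $uv$ is doubly normal to $\Im$, i.e.\ $(u,v)$ satisfies the bi-normal condition of Lemma \ref{lemnormal}.

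The remaining step is to show every such smooth bi-normal pair has length $2$. I plan to case-split on the surface piece containing $u$. If $u=\phi_1(x,y)$ lies on a wedge $(AB)$, its unique tangent plane is orthogonal to the direction $\phi_1(x,y)-x$, so the normal line at $u$ is the line $L(x,y)\in\mathcal{L}_{AB,CD}$. A smooth bi-normal partner $v$ must also have its normal line equal to $L(x,y)$; by the pairwise disjointness asserted in Lemma \ref{lemfam}, $v$ cannot lie on a wedge indexed by a different pair of opposite edges, and the same-wedge possibility is eliminated by a direct check on the two intersections of $L(x,y)$ with the pea string. Hence $v$ lies on the opposite wedge $(CD)$ at $\phi_2(x,y)$, and Lemma \ref{lemY} yields $|u-v|=r_1+r_2-c_1c_2=2$. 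If instead $u$ lies on a spherical cap $(X)$, the normal line at $u$ is the radial line through the vertex $x$; combining Lemma \ref{lemdia} ($\Im\subset T\subset\bar B(x,2)$) with the triangle inequality $|a-c_1|\leq|a-p|+|p-c_1|$ applied at any pea string center $p$ crossed by this line (together with the identity $r_1=|a-c_1|$ holding at beam endpoints) bounds $|u-v|\leq 2$ for every bi-normal partner $v$ of $u$.

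The principal obstacle I expect is the wedge sub-case: one must verify that the tangent plane at a smooth wedge point $\phi_1(x,y)$ is orthogonal precisely to $L(x,y)$, and then use the combinatorial content of Lemma \ref{lemfam} to confine the bi-normal partner to the opposite wedge. Once this identification is in place, the smooth extremum realizes a Lemma \ref{lemY} configuration and has length exactly $2$.
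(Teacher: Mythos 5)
Your overall architecture coincides with the paper's proof: take an extremal pair, use first-order optimality to get double normality, split into the spherical-cap case and the wedge case, use Lemma \ref{lemfam} to force the partner of a wedge point onto the opposite wedge, and finish with Lemma \ref{lemY}. The wedge sub-case as you describe it is essentially the paper's argument (the paper, too, relies on the fact that a line of $\mathcal{L}_{ab,cd}$ meets each of $E(\Sigma_1)$ and $E(\Sigma_2)$ in a single point, which is your ``direct check'').

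The gap is in the spherical-cap sub-case. The inclusion $\Im\subset T\subset\bar B(a,2)$ bounds the distance from the vertex $a$ to every point of $\Im$ by $2$; it does \emph{not} bound $|u-v|$ for $u\in (A)$, since two points of a ball of radius $2$ can be $4$ apart, and the triangle inequality $|a-c_1|\le |a-p|+|p-c_1|$ you invoke holds for every $p$ whatsoever and carries no information about $v$. What is actually needed is this: the normal line at $u\in(A)$ passes through $a$ and $|u-a|=2$ exactly; the bi-normal partner $v$ lies on this line, and since $v\in\bar B(a,2)$ the only dangerous configuration is $a$ strictly between $u$ and $v$. That configuration must be excluded, and it is, because the chord of $T$ along this line terminates at $a$: for $u$ in the cap $(A)$ one has $\langle a-u,\,a-b\rangle=\tfrac12\bigl(|a-u|^2+|a-b|^2-|u-b|^2\bigr)\ge 2>0$, so every point past $a$ in the direction $a-u$ leaves $\bar B(b,2)\supset T\supset\Im$. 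Equivalently, as the paper argues, $ua$ is already a chord of $\Im$ of length $2$, so the maximizing partner of a cap point can only be the vertex itself. Note that you cannot shortcut this with ``$\mathrm{diam}(T)=2$'': the Reuleaux tetrahedron has diameter strictly greater than its edge length (the distance between midpoints of opposite curved edges is about $1.025$ times the edge), which is precisely why the lemma requires Lemma \ref{lemfam} and this case analysis rather than following directly from Lemma \ref{lemdia}. With the cap sub-case repaired as above, your proof closes; you should also say explicitly that, by symmetry of the extremal pair, the wedge sub-case may assume both $u$ and $v$ lie on wedges, since otherwise your ``hence $v$ lies on the opposite wedge'' silently ignores the possibility that $v$ lies on a cap.
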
 

\begin{proof} Let $xy$ be a diameter of $\Im$ and let $L$ be the line through $x$ and $y$. If $x$ lies in some of the sphere caps, say $x\in(A)$, then the vertex $a$ lies in $L$. Therefore, $xa$ is a chord of the Reauleaux tetrahedron, but by Lemma \ref{lemdia}, the Reauleaux tetrahedron contains $\Im$ and consequently $xa$ is a chord of $\Im$. This implies that $y=a$ and therefore, in this case, the diameter of $\Im$ is $2$.

Suppose now that $x$ lies in the relative interior of one of the wedge-pod surfaces, say, $(AB)$, and hence $L=L(x_0,y_0)$, for some $x_0\in E(\Sigma_1)\setminus\{a,b\}$ and $x=\phi_1(x_0,y_0)$.  Similarly $y$ lies in the interior of another wedge-pod surface and $L$, its normal line at $y$, must be another element of $\mathcal{L}$. By Lemma \ref{lemfam}, $L=L(x',y')$ for some $y'\in E(\Sigma_2)\setminus \{c,d\}$  
and $y=\phi_2(x',y')$.  Now, it is  easy to verify that $L=L(x_0,y_0)=L(x',y')$ implies that $x=x'$ and $y=y'$. Consequently  $y=\phi_2(x_0,y_0)$ and hence the length of the diameter $xy$ is $2$, as we wished. 
\end{proof}

\begin{teo}\label{teomain}
The surface $\Im$ is the boundary of a body of constant width $2$.
\end{teo}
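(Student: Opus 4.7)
The plan is to combine the local structure provided by Lemma \ref{lemnormal} with the global diameter bound of Lemma \ref{lemdia2}: the first supplies, at every point, a binormal pairing of length $2$ with matching tangent planes, while the second prevents points of $\Im$ from escaping on the wrong side of those tangent planes. Together, these two facts should force $\Im$ to be the boundary of a convex body and to have constant width $2$ in every direction.

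The first step is to show that $\Im$ is the boundary of a convex body $K$. Let $p = \xi(x) \in \Im$ with $p \notin \{a,b,c,d\}$, and let $q = \xi(y)$ and $H_p$ be the binormal partner and tangent plane given by Lemma \ref{lemnormal}, so $H_p \perp pq$ and $|pq|=2$. Letting $u$ be the unit vector pointing from $q$ to $p$, any $z \in \Im$ with $\langle z-p,u\rangle > 0$ would satisfy
\[|z-q|^2 = |z-p|^2 + 2|p-q|\,\langle z-p,u\rangle + |p-q|^2 > 4,\]
contradicting Lemma \ref{lemdia2}. Hence $\Im$ lies in the closed half-space of $H_p$ containing $q$, i.e., $H_p$ is a global support plane. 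Since smooth points are dense in $\Im$ and $\Im$ is a topologically embedded sphere, it follows that the convex body $K := \conv(\Im)$ has $\partial K = \Im$.

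The second step is constant width. Recall that $K$ has constant width $c$ iff $h_K(u)+h_K(-u)=c$ for every unit vector $u$, where $h_K$ denotes the support function. Given such a $u$, pick any smooth $p \in \Im$ at which $u$ is an outward normal of $K$, and let $q$ be its binormal partner. By Step 1, $H_p$ supports $K$ at $p$ with outward normal $u$; since $q \in K$ and $q-p \perp H_p$ with $|q-p|=2$, we conclude $q-p = -2u$. By the symmetry of the binormal pairing in Lemma \ref{lemnormal}, $p$ is also the partner of $q$, so Step 1 applied at $q$ shows that $H_q$ supports $K$ at $q$ with outward normal $-u$. Therefore $h_K(u)+h_K(-u) = \langle p-q,u\rangle = 2$, as required.

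The main obstacle I anticipate is handling the four non-smooth vertices $a,b,c,d$, where $K$ has a non-trivial normal cone and Lemma \ref{lemnormal} distinguishes only one binormal direction per vertex. I expect to resolve this by a continuity argument: any outward normal at a vertex is a limit of outward normals at smooth points on the neighboring wedge-pod surfaces, and the corresponding binormal partners converge to a boundary point on the opposite side of $K$, yielding the required pair at distance $2$ in that direction as well.
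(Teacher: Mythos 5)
Your Step~1 and the smooth-direction half of Step~2 are sound, and together they amount to reproving, in this special case, the characterization that the paper simply cites: the paper's own proof records that $\mathrm{diam}\,\conv(\Im)=2$ (Lemma \ref{lemdia2}) and that every point of $\Im$ is an endpoint of a diametral chord (Lemma \ref{lemnormal}), concludes that $\Im=\partial\conv(\Im)$, and then invokes \cite[Theorem 3.1.7]{MMO} to deduce constant width. So your route is genuinely different only in that you try to verify $h_K(u)+h_K(-u)=2$ direction by direction instead of quoting that theorem --- and that is exactly where the trouble lies.

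The gap is the one you flagged, and your proposed repair does not work. At each of $a,b,c,d$ the body has a genuine vertex singularity, so the normal cone of $K$ there is a solid three-dimensional convex cone. A direction $u$ in the \emph{interior} of that cone is attained only at the vertex, and it is \emph{not} a limit of outward normals at nearby smooth points: the normals along the adjacent wedge-pod surfaces and spherical caps only accumulate on (a generating subset of) the boundary of the normal cone, never on its interior --- compare a polytope vertex, where the nearby facet normals form a finite set while the normal cone is solid. Since these exceptional directions fill an open subset of $\mathbb{S}^2$, you also cannot reach them by continuity of the width function from the directions you have already handled. Two ways to close the gap: (i) cite \cite[Theorem 3.1.7]{MMO} (a convex body each of whose boundary points is an endpoint of a diameter has constant width), whose hypotheses your Step~1 together with Lemmas \ref{lemnormal} and \ref{lemdia2} already establish; or (ii) treat the vertex cones directly, noting that $a$ is at distance exactly $2$ from every point of the spherical cap $(A)\subset\mathbb{S}(a,2)$, so that for $u=(a-z)/2$ with $z\in(A)$ one gets $h_K(u)+h_K(-u)\geq\langle a-z,u\rangle=2$ --- but then you still owe the verification that the normal cone of $K$ at $a$ is precisely the cone spanned by these directions.
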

\begin{proof} Consider the convex hull $\conv(\Im)$ of $\Im$. By Lemma \ref{lemdia2}, the diameter of $\conv(\Im)$ is $2$. Moreover, by 
Lemma \ref{lemnormal}, every point of $\Im$
is the extreme point of a diameter of $\conv(\Im)$. This implies immediately that $\Im$ is the boundary of $\conv(\Im)$. Moreover, again by 
Lemma \ref{lemnormal}, 
$\Im$ is smooth with the exception of the $4$ vertices of the tetrahedron $a,b,c,d$ at which $\Im$ has vertex singularities. Finally, \cite[Theorem 3.1.7]{MMO} implies that $\conv(\Im)$ is a body of constant width. \end{proof}

\section{Parabolic Devices and Robert's Body of Constant Width}

 In this section, we will study the case of parabolic pea pod devices; that is, when the frames of the two pea pod devices satisfy that one of the circles is a line through $I$ and the confocal quadrics of the pea pod devices are both parabolas. Furthermore, the spheres of the pea pods rest over the longitudinal beams.
 
 We begin by giving a couple of definitions. Two sets $S, T\subset \mathbb{R}^3$ are \emph{similar} if there is a homothecy $h$ and an isometry $\rho$ such that $\rho h(S)=T$.
A tetrahedron with vertices at $\{a,b,c,d\}$ is called \emph{semi-regular} if the line through the midpoints of $ab$ and $cd$ is orthogonal to both $ab$ and $cd$.

\begin{lema}\label{lemsimilar}
Given  a semi-regular tetrahedron with vertices $\{a,b,c,d\}$ and a pair of confocal quadrics $C_1,C_2$, there is a pair of confocal convex pea pod devices $\Sigma_1$, $\Sigma_2$ with longitudinal beams $ab$ and $cd$ and with convex confocal quadric pea strings similar to $C_1,C_2$.    
\end{lema}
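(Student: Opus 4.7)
The plan is to introduce coordinates adapted to the semi-regular tetrahedron, parametrize the family of confocal pairs similar to $(C_1,C_2)$ that can be placed in two perpendicular planes through $L$ by a scale and a translation, and then show that the two point-on-quadric conditions that pin the longitudinal beams to $ab$ and $cd$ always admit a common solution.

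First I would put $L$, the common perpendicular to $ab$ and $cd$, on the $z$-axis and take the two planes $H_1\supset ab$ and $H_2\supset cd$ (each containing $L$) to be the $xz$- and $yz$-planes, so that $a=(\alpha,0,h_1)$, $b=(-\alpha,0,h_1)$, $c=(0,\beta,h_2)$, $d=(0,-\beta,h_2)$. I would then place the given confocal pair $(C_1,C_2)$ in its standard position with common axis $L$, centered at the origin, $C_1\subset H_1$ and $C_2\subset H_2$. The similarities preserving this axial/planar structure are generated by uniform dilation and translation along $L$, yielding a two-parameter family $(C_1^{\lambda,t},C_2^{\lambda,t})$ of pairs, each confocal and each similar to $(C_1,C_2)$.

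Next I would convert the longitudinal-beam requirement into an algebraic condition. The pea pod frame with pea string $C_1^{\lambda,t}$ is obtained by taking the two frame circles centered at the foci of $C_1^{\lambda,t}$ and passing through the desired beam endpoints; by the Steiner-chain construction, the sum of their radii equals the major-axis sum of $C_1^{\lambda,t}$. Hence requiring both frame circles to pass through $a$ (and thus through $b$ by the $x\mapsto -x$ symmetry of $C_1^{\lambda,t}$) amounts to requiring that $|a-f_1|+|a-f_2|$ equal that major-axis sum, i.e., that $a\in C_1^{\lambda,t}$; analogously $cd$ is the beam of $\Sigma_2$ iff $c\in C_2^{\lambda,t}$. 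So the lemma reduces to finding $(\lambda,t)$ with $a\in C_1^{\lambda,t}$ and $c\in C_2^{\lambda,t}$.

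The main obstacle is this last step, and I would handle it by direct computation in each of the two possible cases (elliptic-hyperbolic or parabolic-parabolic). Substituting the coordinates of $a$ and $c$ into the standard forms \eqref{elipse-hyperbola} or \eqref{parabolapor2} gives two expressions for $\lambda^2$ as functions of $t$; equating them eliminates $\lambda^2$ and leaves a quadratic (affine, in the parabolic case) equation in $t$. In the elliptic-hyperbolic case a short calculation shows that this quadratic has positive leading coefficient and minimum value $-H^2/b^2$ with $H=h_2-h_1$, while the right-hand side is the strictly positive number $(\alpha^2+\beta^2)/b^2$, so real solutions always exist. Convexity of the resulting devices is then automatic from confocality: each focus of $C_1^{\lambda,t}$ coincides with an on-axis vertex of $C_2^{\lambda,t}$ and vice versa, so the principal-circle center of either device equals the bulb center of the other, once the appropriate root of the quadratic is chosen so that these matched points lie on the correct components of the pea string arcs.
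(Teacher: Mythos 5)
Your proposal is correct and its central computation checks out; it reaches the same configuration as the paper but by a genuinely different route. The paper fixes the confocal pair $C_1,C_2$ in standard position and slides a similar copy $a'b'c'd'$ of the tetrahedron along it, parametrized by the position of the chord $a'b'$ on $C_1$; it then compares $c'd'$ with the chord $c''d''$ that the line through $c'd'$ cuts on $C_2$ at the two extremes of the parameter range and concludes by the intermediate value theorem, rescaling at the end. You fix the tetrahedron instead and solve explicitly for the dilation $\lambda$ and axial translation $t$ of the quadric pair. Your discriminant analysis in the elliptic--hyperbolic case is right: the two incidence conditions combine to $\frac{(h_2-t)^2}{a^2-b^2}-\frac{(h_1-t)^2}{a^2}=\frac{\alpha^2+\beta^2}{b^2}$, whose left side is an upward parabola in $t$ with minimum value $-H^2/b^2<0$ (with $H=h_2-h_1$), so real roots always exist and $\lambda^2>0$ follows; this buys an explicit count of solutions (two here, one in the parabolic case, consistent with Corollary \ref{corsemi}) where the paper's continuity argument gives only existence. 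Two soft spots remain, neither fatal and both left equally implicit in the paper's own proof. First, in the parabolic case the incidence conditions are not of the form $\lambda^2=f(t)$, so ``eliminating $\lambda^2$'' does not literally apply; one should instead subtract the two conditions to get a quadratic in $\lambda$ alone, whose constant term is negative and hence has a unique positive root, and then read off $t$. Second, convexity is not quite automatic from confocality: for the chosen root one must still verify that the focus of $C_1^{\lambda,t}$ nearer the midpoint of $ab$ (the principal-circle center) coincides with the on-axis point of the arc $E(\Sigma_2)$ (the bulb center), and that the midpoint of $ab$ lies outside the focal segment of $C_1^{\lambda,t}$ so that the frame is genuinely elliptic rather than hyperbolic. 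These are sign and position checks on $t$ that should be recorded to make the last sentence of your argument precise.
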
 

\begin{proof} Let $C_1, C_2$ be a pair of confocal quadrics. Suppose without loss of generality that $C_1$ lies in the $xz$-plane and $
C_2$ in the $yz$-plane, while the foci lie on the $z$-axis. See Figure \ref{confo}. Suppose $a'b'$ is a chord of $C_1$; that is, $a',b'\in C_1$, and $a'b'$ is contained in the $xz$-plane and furthermore, assume that $a'b'$ is orthogonal to the $z$-axis. 
Consider the semi-regular tetrahedron $a'b'c'd'$ similar to $abcd$. Then $c'd'$ is orthogonal to the $z$-axis and is contained in the $yz$-plane. 
If $a'b'$ is sufficiently small; that is, if $a'b'$ is close to the focus of $C_1$, then the line through $c'd'$ intersects $C_2$ in an interval $c''d''$ in such a way that $c'd'\subset c''d''$. On the other hand, if $a'b'$ is sufficiently close to the focus of $C_2$, then $c''d''\subset c'd'$. By continuity, there is a position of $a'b'$ for which $c''d''=c'd'$, as we wished. \end{proof}

Since up to similarity there is only one pair of confocal parabolas, we have the following corollary.

\begin{cor}\label{corsemi}
Given  a semi-regular tetrahedron with vertices $\{a,b,c,d\}$, there is only one pair of convex confocal pea pod devices 
$\Sigma_1$, $\Sigma_2$ with longitudinal beams $ab$ and $cd$ and with  confocal  parabolic pea strings.    
\end{cor}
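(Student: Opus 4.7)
The plan is to derive the corollary from Lemma \ref{lemsimilar}, taking advantage of the fact that---unlike in the elliptic-hyperbolic case---all pairs of confocal parabolas are similar to each other (scaling the parameter $a$ in \eqref{parabolapor2} is itself a similarity).

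Existence follows immediately by applying Lemma \ref{lemsimilar} to any fixed pair of confocal parabolas. The resulting pair $(\Sigma_1, \Sigma_2)$ has pea strings similar to the chosen parabolas, and since similarities preserve both the parabolic type and the confocal relation, these pea strings themselves form a pair of confocal parabolas with longitudinal beams $ab$ and $cd$.

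For uniqueness, suppose $(\Sigma_1, \Sigma_2)$ and $(\Sigma_1', \Sigma_2')$ are two such pairs. Both must share the same frame planes, namely $H_1$ (containing $ab$ and the common perpendicular $L$ of $ab, cd$) and $H_2$ (containing $cd$ and $L$). Each pea string is a parabola in its plane with axis $L$ passing through the endpoints of its beam (where the pea radii shrink to zero), and is therefore determined by a single real parameter---the height of its focus on $L$. Confocality forces the focus of each parabola to coincide with the vertex of the other, since a parabola meets its own axis only at its vertex. Combined with the beam-endpoint conditions, this yields a quadratic in the focal parameter with two real roots, corresponding to two possible opening directions along $L$. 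The convexity condition---that the bulb of each device (the vertex of its pea string) lies on the opposite side of its beam from the associated $c_{p,i}$---selects a unique root, so the two pairs coincide.

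The main obstacle is this final convexity check: one must verify that exactly one of the two algebraic roots places both bulbs in the correct region, namely between the two beams along $L$. It reduces to a direct sign analysis using the quadratic formula and the geometry of the semi-regular tetrahedron.
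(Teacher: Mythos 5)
Your route is genuinely different from the paper's, which disposes of the corollary in a single sentence: existence from Lemma \ref{lemsimilar} applied to the (unique up to similarity) pair of confocal parabolas, and uniqueness left implicit in that same ``only one pair up to similarity'' remark. Your explicit reduction is more informative: the frame planes are forced to be the two planes through the common perpendicular $L$ of $ab$ and $cd$; each pea string is a parabola with axis $L$ through its beam endpoints (correct --- the endpoints are zero-radius disks of the Steiner-type family); and confocality forces the focus of each parabola to be the vertex of the other, since each parabola meets $L$ only at its vertex. Writing $a=(x_0,0,z_1)$, $b=(-x_0,0,z_1)$, $c=(0,y_0,z_2)$, $d=(0,-y_0,z_2)$ and letting $p$ be the signed distance from the vertex of $C_1$ to the vertex of $C_2$ along $L$, the two incidence conditions do collapse to the single quadratic $4p^2-4p(z_2-z_1)-(x_0^2+y_0^2)=0$, whose roots are real and of opposite sign. (One quibble: the height of the focus alone does not determine the parabola --- for each focus height there is one parabola through $a,b$ opening each way --- but your parameter $p$ does, so this is only a wording issue.)

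The step you defer, however, is where the content lies, and as you describe it it would fail: you say one must check that both bulbs lie ``between the two beams along $L$,'' but that is the wrong region. In the convex configuration each bulb (the vertex of its pea string) lies on the far side of its \emph{own} beam from the \emph{other} beam, i.e.\ \emph{outside} the slab bounded by the two beam heights. This is forced by the definition of the parabolic pea pod (the disks lie in the half-plane of $I$ opposite to $c_p$, and $c_p$ is the other device's bulb center by convexity) and is what guarantees the paper's observation that $E(\Sigma_i)\cap abcd=\{a,b\}$: an arc from $a$ to $b$ bulging toward $cd$ would cut through the triangle $a\,b\,m$, $m$ the midpoint of $cd$. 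Concretely, the positive root gives vertex heights $\beta<z_1<z_2<\delta$, both vertices pushed outward past their own beams --- this is the convex pair; the negative root gives $\beta>z_1$ and $\delta<z_2$, so each arc bulges into the tetrahedron and the pair is not convex. With your stated criterion neither root would qualify. Once the criterion is corrected, the sign analysis is immediate (the two roots have opposite signs) and your argument closes; in that repaired form it is a legitimate, and more complete, alternative to the paper's appeal to similarity.
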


\subsection {Robert's Body with Tetrahedral Symmetry}

The body of constant width obtained from the regular tetrahedron $abcd$ with sides of length $2$ by always using  pea pod devices  with  confocal  parabolic pea strings will be called  the \emph{Robert's body} $\mathcal{R}$ of constant width.  This body was informally described by Patrick Roberts at the web page \cite{RP}. 
Observe that Robert's body can not be the extreme body of the Blaschke-Lebesgue conjecture because Anciaux and Guilfoyle  proved in \cite{AG} that the body that minimizes the volume among all 3-dimensional bodies of constant width has the property that the smooth parts of its boundary are spherical caps or surfaces of rotation, which is not the case for Robert's body. See \cite[Section 14.2]{MMO}. The boundary of Robert's body is smooth with the exception of the $4$ vertices of the tetrahedron, at which the boundary has vertex singularities.

Next we will show that Robert's body has the symmetry of the tetrahedron $abcd$. Using the unique pair of confocal pea pod devices $\Sigma_1$, $\Sigma_2$ with longitudinal beams $ab$ and $cd$ and with confocal  parabolic pea strings, we obtain the wedge-pod surfaces $(AB)$ and $(CD)$, defined as in Section \ref{assembling}.

Let $L$ be the axis of the devices; that is, $L$ is the line through the midpoints of $ab$ and $cd$, and suppose the origin $O$ is the barycenter of the tetrahedron $abcd$. Let 
$\rho_{\pi/2}$ and $\rho_{\pi}$  be the rotations along $L$ by an angle of $\pi/2$ and $\pi$ respectively, and let $\tau_1$ be the reflection through the plane of $\Sigma_1$ and  $\tau_2$ the reflection through the plane of $\Sigma_2$,  
 and let $\alpha$ be the antipodal map. Since the 
 confocal   parabolic pea strings are invariant under $\rho_{\pi}$, $\alpha\rho_{\pi/2}$, $\tau_1$ and $\tau_2$,  by construction, the same holds for the wedge-pod surfaces $(AB)\cup(CD)$. Now, using the unique pair of confocal pea pod devices with longitudinal beams $ac$ and $bd$ with confocal  parabolic pea strings, we obtain the corresponding  wedge-pod surfaces $(AC)$ and $(BD)$ and in similar fashion the corresponding wedge-pod surfaces $(AD)$ and $(BC)$.  By Corollary \ref{corsemi}, there is an isometry that sends $(AB)$ to $(AC)$ and $(CD)$ to $(BD)$.  All this implies that Robert's body has tetrahedral symmetry.

 \subsection{Robert's Body and the Minkowski Sum of the Meissner Bodies}
 
 Although Robert's body and the Minkowski sum of the two well known Meissner bodies of constant width (see \cite[Section 8]{MMO}) both have constant width $2$, both   contain the tetrahedron $abcd$, both have  tetrahedral symmetry, and  the boundary of both is smooth with the exception of the $4$ vertices of the tetrahedron where they have vertex singularities, they are not the same body. We will observe next that they are essentially different, since they differ in one of their sections, as the following theorem shows.

 \begin{teo}
 The Minkowski sum of the two Meissner bodies of constant width $1$ is not the Robert's body $\mathcal{R}$.
 \end{teo}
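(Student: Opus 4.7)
The plan is to exhibit a plane $\Pi$ such that $\mathcal{R}\cap\Pi$ and $(M_1+M_2)\cap\Pi$ are different planar convex regions. Since both bodies share the full tetrahedral symmetry of $abcd$, both contain $abcd$, and both have vertex singularities exactly at $a,b,c,d$ with smooth boundary elsewhere, the distinction must be detected in the smooth part of the boundary.

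A good candidate is a plane of symmetry of $abcd$: for instance, the plane $\Pi$ containing the edge $ab$ and the line $L$ through the midpoints of $ab$ and $cd$ (this plane is invariant under the reflection exchanging $c\leftrightarrow d$). I would compute $\partial(\mathcal{R}\cap\Pi)$ explicitly from the parabolic pea pod construction of Section~\ref{peapods}: using the parametrization~\eqref{parabolapor2} for the parabolic pea strings and the maps $\phi_1,\phi_2$ of Section~\ref{wedges} that define the wedge-pod surfaces, one obtains a planar convex curve whose support function admits a closed-form expression in the directions lying in $\Pi$. For the Minkowski sum I would use the additivity identity $h_{M_1+M_2}=h_{M_1}+h_{M_2}$ together with the standard explicit support functions of the two Meissner bodies (spherical caps of radius $1$ near each vertex, plus spindle-torus contributions along three of the six smoothed edges), summed and restricted to directions in $\Pi$.

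The theorem then reduces to exhibiting a single direction $u\in\Pi$ at which $h_{\mathcal{R}}(u)\neq h_{M_1}(u)+h_{M_2}(u)$. The main obstacle is the explicit support-function computation, especially for the Meissner sum: one must pair each Meissner body with its mirror image so that $M_1+M_2$ actually acquires the full tetrahedral symmetry, and then carefully track how the spindle-torus pieces combine. A numerical check at a well-chosen direction in $\Pi$ --- for example, one pointing from the barycenter toward the midpoint of the edge $ac$ --- should suffice, since the parabolic envelope geometry underlying $\mathcal{R}$ and the circular-plus-spindle geometry underlying $M_1+M_2$ yield manifestly different analytic expressions at such a direction.
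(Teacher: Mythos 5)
Your proposal follows essentially the same route as the paper: both cut by the symmetry plane through $ab$ and the midpoint of $cd$, use the reflection symmetry to identify the section of $M_1+M_2$ with the Minkowski sum of the sections of $M_1$ and $M_2$, and then distinguish the resulting planar constant-width figures. The only real difference is the final step, where the paper recognizes the sections as members of a one-parameter family of ``$r$-figures'' and argues structurally that a Reuleaux triangle plus an $r_2$-figure is never an $r$-figure, whereas you propose an explicit (possibly numerical) support-function comparison; in both versions this decisive planar comparison is the part still to be carried out in detail.
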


 \begin{proof} Let $M_1$ and $M_2$ be the two Meissner bodies of constant width $1$ constructed over the regular tetrahedron 
$\{\frac{a}{2},\frac{b}{2},\frac{c}{2},\frac{d}{2}\}$, and let $M=M_1+M_2$. 
Let $H$ be the plane that contains the side $ab$ and passes through the midpoint of the side $cd$. Note that in both cases, reflection about $H$ leaves $M$ invariant. This implies that $H\cap M$ has constant width and hence that 
 $H\cap M=(H\cap M_1)+(H\cap M_2)$. On the other hand, note that $(H\cap M_1)$ is a Reuleaux triangle whereas 
 $(H\cap M_2)$ is the figure of constant width depicted in Figure \ref{isaac2} for some parameters $r_1,r_2>0$ and $r_1+r_2=1$. Let us call this figure the $r_1$-figure of constant width $1$. The Reuleaux triangle is thus the $1$-figure of constant width $1$. On the other hand, the section 
 $H\cap\mathcal{R}$ shown in the second picture of Figure \ref{isaac2} is also an $r$-figure for the parameter $r>0$, where $r$ is the radius of the principal circle of the parabolic pea pod device.
 It is clear now that the Minkowski sum of the Reuleaux triangle with the $r_2$-figure, $r_2>0$, is never an $r$-figure of constant 
 width $1$. \end{proof}

 \begin{figure}[h]
\begin{center}
\includegraphics[scale=.34]{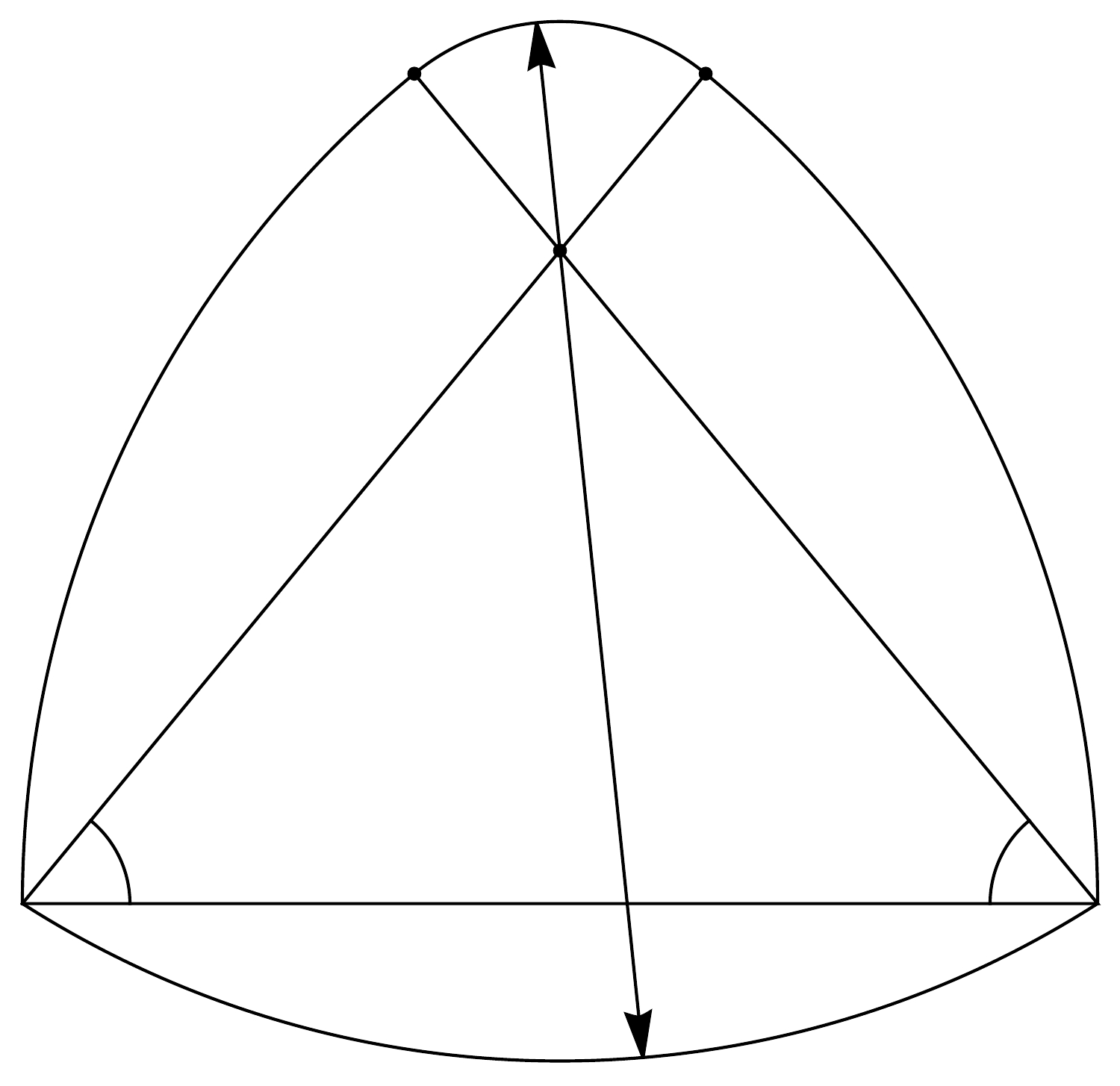}\put(-128,17){$a$}\put(0,17){$b$}\put(-66,45){$r_1$}\put(-62,104){$r_2$}\ \ \ \ \ \ 
\includegraphics[scale=.34]{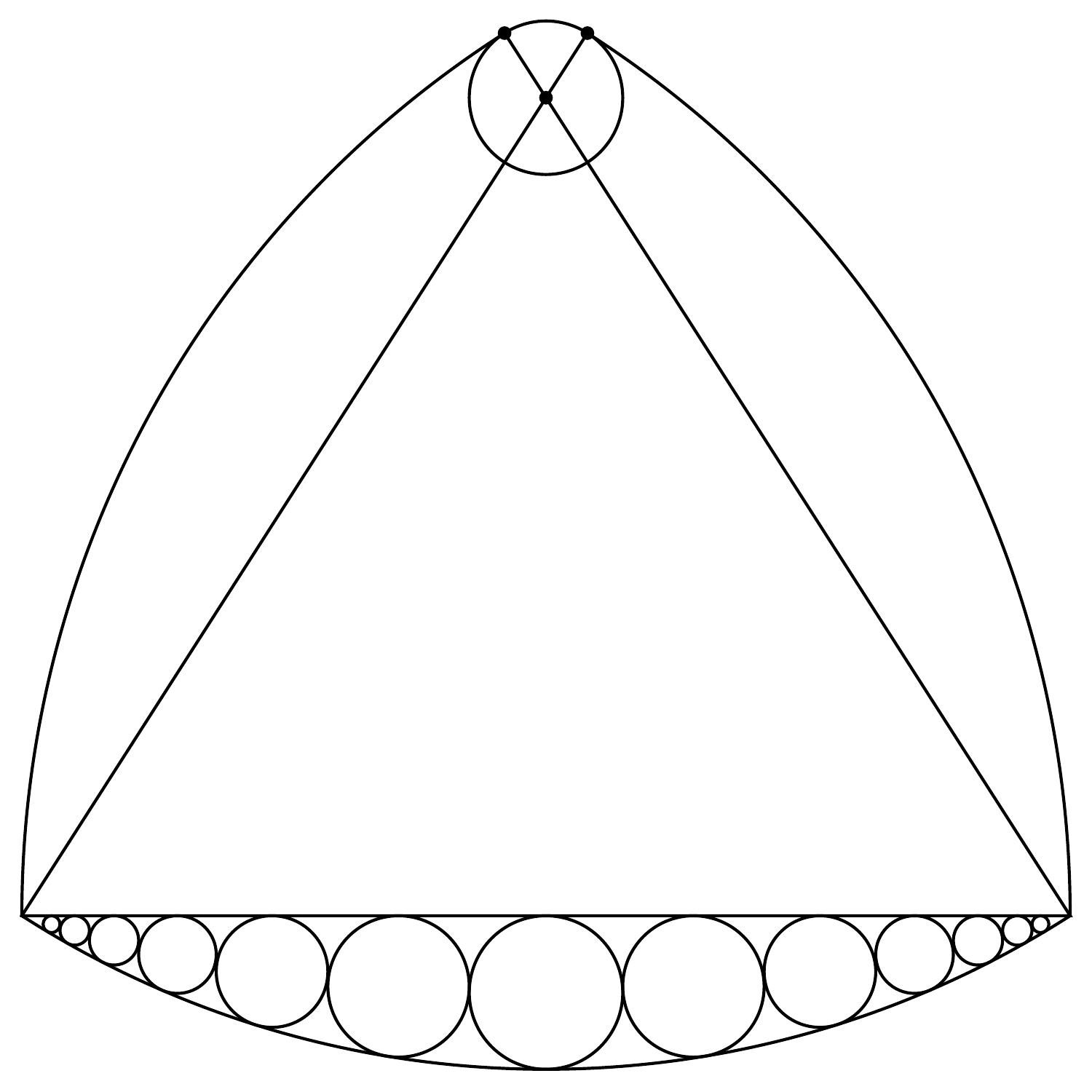}\put(-128,17){$a$}\put(0,17){$b$}
\caption{The $r_2$-figure and a section of Robert's body }\label{isaac2}
\end{center}
\end{figure}

 \subsection{Classic Meissner Bodies as Peabodies}

Take $C_e$ and $C_h$, a confocal ellipse and hyperbola respectively, and let the focus of the ellipse come closer and closer to the center.
 At the limit situation, $C_e$ becomes a circle and $C_h$ a line orthogonal to the circle through its center. This pair of curves may be considered as confocal quadrics in our construction.   

Let $abcd$ be the regular tetrahedron of side $2$. Using the unique pair of confocal pea pod devices $\Sigma_1$, $\Sigma_2$ with longitudinal beams $ab$ and $cd$ and with  confocal  pea strings, the circle and the orthogonal line,  we obtain the wedge-pod surfaces $(AB)$ and $(CD)$ as in Section \ref{wedges}.  We can see that  the two circles of $\Sigma_1$ coincide and the center is the midpoint of $cd$. So, in this case, the collection of disks of the pea pod of $\Sigma_1$ consists of points, and hence the corresponding wedge-pod surface $(AB)$ is, in this degenerate case, the arc of a circle with its center at the midpoint of $cd$ from $a$ to $b$. On the other hand, the hyperbolic pea pod device $\Sigma_2$ is such that its quadric pea string is the line through $cd$ and therefore all the disks of the pea pod of $\Sigma_2$ have their center at $cd$. This implies that the wedge-pod surface $(CD)$ is  a surface of revolution along $cd$. As can be seen, this is precisely the surgery procedure  given in the construction of  the Meissner bodies. See \cite[Section 8]{MMO}. 
Finally, observe that there is a continuous deformation along the collection of peabodies of constant width from the most symmetric such body, the Robert's body, to the Meissner bodies. This is because, by Lemma \ref{lemsimilar}, we can use any pair of confocal quadrics to construct the pod surfaces $(AB)$ and $(CD)$.

\section{General Meissner Peabody Polyhedra}

We will finish this paper by extending this construction to a more general one. In \cite{MR}, Montejano and Roldan used metric embeddings of self-dual graphs to construct bodies of constant width, called Meissner polyhedra.
Let $G$ be a metric embedding of a self-dual graph and let $ab$ and $cd$ be a pair of dual edges of $G$. It is easy to see that $\{a,b\}$ and $\{c,d\}$ are the vertices of a semi-regular tetrahedron.  Using Corollary \ref{corsemi},  we can see that there is only one pair of convex confocal pea pod devices $\Sigma_1$, $\Sigma_2$ with longitudinal beams $ab$ and $cd$ and with  confocal  parabolic pea strings. This allows us to obtain the wedge-pod surfaces $(AB)$ and $(CD)$ and  perform  surgery along two dual singularity edges of the ball polyhedra $B(G)=\bigcup_{v\in V(G)} B(v)$.
By performing this procedure for any pair of dual edges of $G$, we can assemble, exactly as in Section \ref{assembling}, a peabody of constant width $\mathcal{R}_G$ by replacing a small neighborhood of all edges of $G$ with sections of an envelope of spheres as before. Note that by the discussion in this last two sections, every symmetry of $G$ is also a symmetry of $\mathcal{R}_G$.

\subsection*{Acknowledgments} Luis Montejano and Deborah Oliveros acknowledge  support  from CONACyT under 
project CONACyT 282280 and  from PAPIIT-UNAM under project IG100721.


\begin{thebibliography}{9}

\bibitem{AG} Anciaux, H., Guilfoyle, B., On the three-dimensional Blaschke-Lebesgue problem, \emph{Proc. Amer. Math. Soc.} \textbf{139} (2011), 1831--1839.

\bibitem{Bolt-Yagl} Boltyanski, V. G., Yaglom, I. M., Konvexe Figuren und K\"orper, in: \emph{Enzyklop\"adie der Elementarmathematik, Band V (Geometrie)}, eds. P. S. Aleksandrov, A. I. Markushevich, and A. J. Chintschin, Deutscher Verlag der Wissenschaften, Berlin, 1971, pp. 171--257 (Original Russian Version: Moscow, 1966).

\bibitem{D} Julia, G., \emph{Cours de g\'eom\'etrie infinit\'esimale}, cinqui\`eme fascicule: th\'eorie des surfaces, Gauthier-Villars, Paris, 1955.

\bibitem{H} Hilbert, D. and Cohen-Vossen, S., \emph{Geometry and the Imagination}, Chelsea Publishing Company, New York, 1952.



\bibitem{MMO} Martini, H., Montejano, L., Oliveros, D., \emph{Bodies of Constant Width: An Introduction to Convex Geometry With Applications.} Birkh\"auser, Boston, Bassel, Stuttgart, 2019. 


\bibitem{MeS}  Meissner, E., Schilling F.,  Drei Gipsmodelle von Fl\"achen konstanter Breite, \emph{Z. Math. Phys.} \textbf{60} (1912), 92--94.


\bibitem{MR} Montejano, L., Roldan-Pensado, E., Meissner, E., Polyhedra,  \emph{Acta Math. Hungar.}  \textbf{151} (2) (2017), 482--494.

\bibitem{O} Ogilvy, C.S., \emph{Excursions in Geometry}, Dover, 1990.

\bibitem{RP} Roberts, P., Spheroform with Tetrahedral Symmetry, \url{http://www.xtalgrafix.com/Spheroform2.htm}.

 
\end{thebibliography}
\end{document}